\DeclareMathOperator{\N}{\mathbb{N}}
\DeclareMathOperator{\R}{\mathbb{R}}
\DeclareMathOperator{\E}{\mathbb{E}}
\DeclareMathOperator{\1}{\mathds{1}}
\DeclareMathOperator{\supp}{\mathrm{supp}}
\DeclareFontFamily{U}{mathx}{}
\DeclareFontShape{U}{mathx}{m}{n}{<-> mathx10}{}
\DeclareSymbolFont{mathx}{U}{mathx}{m}{n}
\DeclareMathAccent{\widehat}{0}{mathx}{"70}
\DeclareMathAccent{\widecheck}{0}{mathx}{"71}
\numberwithin{equation}{section}
\def\blfootnote{\xdef\@thefnmark{}\@footnotetext}
\theoremstyle{plain}
\newtheorem{theorem}{Theorem}[section]
\newtheorem{remark}[theorem]{Remark}
\newtheorem{lemma}[theorem]{Lemma}
\newtheorem{proposition}[theorem]{Proposition}
\newtheorem{corollary}[theorem]{Corollary}
\newtheorem{hypothesis}[theorem]{Hypothesis}
\title[Non-$L^1$-Asymptotic completeness for Vlasov-Maxwell]{A note on the non $L^1$-Asymptotic completeness of the Vlasov-Maxwell system}
\author{Emile Breton}
\address{Univ Rennes, CNRS, IRMAR - UMR 6625, F-35000 Rennes, France \\ Email address : emile.breton@univ-rennes.fr}
\begin{document}


\begin{abstract}
    We prove that under a generic asymptotic condition on the charge, the small data solutions to the Vlasov-Maxwell system do not verify linear scattering. In other words, we show the non-$L^1$ asymptotic completeness of the system. The proof makes use of the Lorentz invariance of the equations.
\end{abstract}

\keywords{Relativistic Vlasov-Maxwell system, asymptotic properties, modified scattering, linear scattering, small data solutions, compactly supported data}

\subjclass[2020]{Primary: 35Q83 ; Secondary: 35B40 }

\blfootnote{This work was conducted within the the France 2030 program, Centre Henri Lebesgue ANR-11-LABX-0020-01}


\maketitle

\section{Introduction}
\subsection{General context}

The relativistic Vlasov-Maxwell system models a collisionless plasma, it can be written as 
\begin{equation}
    \tag{RVM}
    \label{equation_RVM1}
    \partial_t f_\alpha+\widehat{v_\alpha}\cdot\nabla_x f_\alpha +e_\alpha(E+\widehat{v_\alpha}\times B)\cdot\nabla_v f_\alpha=0,\qquad 1\leq \alpha \leq N,
\end{equation}
\begin{align*}
    \partial_tE=\nabla\times B -4\pi j,&\qquad \nabla\cdot E=4\pi\rho,\\
    \partial_t B=-\nabla\times E,&\qquad  \nabla\cdot B=0,
\end{align*}
where $\rho,j$ are the charge and current density of the plasma defined by 
\begin{equation*}
    \rho=\sum_{1\leq\alpha\leq N} e_\alpha\int_{\R^3_v} f_\alpha \mathrm{d}v,\quad j=\sum_{1\leq\alpha\leq N}e_\alpha\int_{\R^3_v}\widehat{v_\alpha}f_\alpha\mathrm{d}v.
\end{equation*}
We consider the multi-species case $N\geq 2$, where $f_\alpha(t,x,v)$ is the density function of a species $\alpha$ with mass $m_\alpha>0$ and charge $e_\alpha\neq 0$. Here, $t\geq 0$ denotes the time, $x\in\R^3_x$ the particle position and $v\in\R^3_v$ the particle momentum. Moreover, $(E,B)(t,x)$ denotes the electromagnetic field of the plasma. For a particle of mass $1$ and momentum $v\in\R^3_v$, we will denote its energy by $v^0:=\langle v\rangle=\sqrt{1+|v|^2}$ and its relativistic speed as 
    \begin{equation}
        \widehat{v}:=\frac{v}{v^0},\quad v\in\R_v^3.
    \end{equation}
We will also consider the inverse of $v\mapsto\widehat{v}$, defined as 
\begin{equation}
    u\in\{u\in\R^3_v\,|\,|u|<1\}\longmapsto \widecheck{u}:=\frac{u}{\sqrt{1-|u|^2}}.
\end{equation}
In addition, we denote $v_\alpha(v):=\frac{v}{m_\alpha}$. We will simply write $v_\alpha$ for $v_\alpha(v)$ since there is no risk of confusion. Then,  
    \begin{equation}
        \widehat{v_\alpha}=\frac{v}{\sqrt{m_\alpha^2+|v|^2}}.
    \end{equation}
For any $\alpha$, we will also write 
\begin{equation}
    v_\alpha^0:=\sqrt{m_\alpha^2+|v|^2},
\end{equation}
so that $\widehat{v_\alpha}=\frac{v}{v_\alpha^0}$.
Finally, the initial data $f_{\alpha0}=f_\alpha(0,\cdot)$ and $(E,B)(0,\cdot)=(E_0,B_0)$ also satisfy, in the electrically neutral setting, the constraint equations 
\begin{equation}
    \label{equation_condition_condition_initiale}
    \nabla\cdot E_0=4\pi\sum_{1\leq\alpha\leq N} e_\alpha\int_{\R^3_v}  f_{\alpha0} \mathrm{d}v,\qquad \nabla\cdot B_0=0,\qquad \sum_{1\leq\alpha\leq N} e_\alpha \int_{\R^3_x\times\R^3_v}f_{\alpha0} \mathrm{d}v\mathrm{d}x=0.
\end{equation}

In 3D the global existence problem for the classical solutions to \eqref{equation_RVM1} is still open, though various continuation criteria have been proved (see, for instance, \cite{Luk_Strain_14}).
In contrast, small data solutions are known to be global in time and to disperse at the linear rate \cite{Glassey_Schaeffer_88,Glassey_Strauss_1987,Schaeffer_04,Bigorgne_20,Wang_22}. Note that the electromagnetic field does not have to be small \cite{Wei_Yang_21,Bigorgne_2023}. The asymptotic behavior of small data solutions has been fully understood recently, the electromagnetic field $(E,B)$ enjoys a linear scattering dynamic, whereas the densities $f_\alpha$ exhibit a modified scattering dynamic \cite{Bigorgne_2023,Pankavich_BenArtzi_2024,bigorgne_2023_scatteringmap,breton2025}, as stated in Theorem \ref{theoreme_anciens_resultats} below.\\
The purpose of this article is to rigorously prove that, for \eqref{equation_RVM1}, linear scattering is a non-generic phenomenon. It occurs only for a subset of initial data of codimension $1$. This means that we cannot approximate small data solutions by solutions to the linearized system around the trivial solution.\\
For the Vlasov-Poisson system, the non-$L^1$ asymptotic completeness was proved in \cite{Choi_Ha_11} whereas the modified scattering dynamic was fully described later in \cite{Ionescu_Pausader_22}.

\subsection{Main results}

We begin by recalling the general assumptions and the main theorem of \cite{breton2025}.
\begin{hypothesis}
\label{hypothese_suffisante}
Suppose $(f_\alpha,E,B)$ is a $C^1$ solution to \eqref{equation_RVM1} with initial data $(f_{\alpha0},E_0,B_0)$ and satisfying the following properties.
    \begin{itemize}
        \item There exists $k>0$ such that $f_{\alpha0}$ are nonnegative $C^1$ functions with support in $\{(x,v)\in\R^3_x\times\R^3_v\,|\,|x|\leq k,\,|v|\leq k\}$. Moreover, $E_0$ and $B_0$ are $C^1$ with support in $\{x\in\R^3_x\,|\,|x|\leq k\}$ and satisfy the constraint \eqref{equation_condition_condition_initiale}.
        \item There exists $C_0>0$ such that, for all $(t,x)\in\R_+\times\R^3_x$,
        \begin{align}
            \label{equation_estimee_EB_Glassey}
            |(E,B)(t,x)|&\leq\frac{C_0}{(t+|x|+2k)(t-|x|+2k)},\\
            \label{equation_estimee_gradEB_Glassey}
            |\nabla_x(E,B)(t,x)|&\leq\frac{C_0\log(t+|x|+2k)}{(t+|x|+2k)(t-|x|+2k)^2}.
        \end{align}
    \end{itemize}
\end{hypothesis}
\begin{remark}
    In the following, we will write $a\lesssim b$ when there exists $C>0$ independent of $t$ and depending on $(x,v)$ only through $k$. However, $C$ will usually depend on $(m_\alpha,e_\alpha)_{1\leq \alpha\leq N}$ and the initial data.
\end{remark}
\begin{remark}
     Any solution, arising from compactly supported data, constructed in \cite{Glassey_Schaeffer_88,Glassey_Strauss_1987,Schaeffer_04,Bigorgne_20,Wang_22,Bigorgne_2023} satisfies Hypothesis \ref{hypothese_suffisante}. Consequently, Theorems \ref{theoreme_anciens_resultats}--\ref{theoreme_pas_de_scattering_lineaire} hold for them.
\end{remark}
We recall a series of useful results from \cite{breton2025}.
\begin{theorem}
    \label{theoreme_anciens_resultats}
    Let $(f_{\alpha},E,B)$ be a solution to \eqref{equation_RVM1} satisfying Hypothesis \ref{hypothese_suffisante}. 
    \begin{itemize}
        \item First, $f_\alpha$ vanishes for high velocities. There exists $\beta>0$ such that for any $1\leq \alpha\leq N$ and all $t\geq0$,
    \begin{equation}
        \label{equation_support_f_alpha}
        \supp f_\alpha \subset\{(x,v)\in\R^3_x\times\R^3_v\,|\, |x|\leq \widehat{\beta}_{max}t+k,\,|v|\leq \beta\},\qquad \qquad\widehat{\beta}_{max}:=\max_{1\leq\alpha\leq N} \frac{\beta}{\sqrt{m_\alpha^2+\beta^2}}.
    \end{equation}
    \item For any $1\leq \alpha\leq N$, there exists $Q_\infty^\alpha\in C^0_c(\R^3_v)$ such that for all $t\geq 0$ and $v\in\R^3_v$,
    \begin{equation}
        \label{equation_corollaire_estimee_Q_infini}
        \bigg|\int_{\R^3_x}f_\alpha(t,x,v)\mathrm{d}x - Q^\alpha_\infty(v_\alpha)\bigg|\lesssim \frac{\log^5(2+t)}{2+t}.
    \end{equation}
    Moreover, for all $|x|<t$,
    \begin{equation}
        \label{equation_estimee_Q_infini_v}
        \bigg|t^3\int_{\R^3_v} f_\alpha(t,x,v)\mathrm{d}v-m_\alpha^3\left\langle\widecheck{\frac{x}{t}}\right\rangle^5 Q^{\alpha}_\infty\left(\widecheck{\frac{x}{t}}\right)\bigg|\lesssim \frac{\log^6(2+t)}{2+t}.
    \end{equation}
    \item There exist $\E,\mathbb{B}\in C^0(\R^3_v)$ such that, for all $t\geq 0$ and $|v|\leq\beta_{max}:=\max_{1\leq\alpha\leq N} \frac{\beta}{m_\alpha}$, we have
    \begin{equation}
        \label{equation_estimee_E_bb}
        |t^2 E(t,t\widehat{v})-\E(v)|+|t^2 B(t,t\widehat{v})-\mathbb{B}(v)|\lesssim \frac{\log^6(2+t)}{2+t}.
    \end{equation}
    \item Every density $f_\alpha$ verifies modified scattering. More precisely, for any $\alpha$, there exists $f_{\alpha\infty}\in C_c^0(\R^3_x\times\R^3_v)$ such that, for all $t>0$ large enough and all $(x,v)\in\R^3_x\times\R^3_v$,
    \begin{equation*}
        \left|f_\alpha\left(tv^0_\alpha-\log(t)\frac{e_\alpha}{m_\alpha}\frac{1}{v^0_\alpha}\E(v_\alpha)\cdot\widehat{v_\alpha},x+tv-\log(t)\frac{e_\alpha}{m_\alpha}\frac{1}{v^0_\alpha}\big(\E(v_\alpha)+\widehat{v_\alpha}\times \mathbb{B}(v_\alpha)\big),v\right)-f_{\alpha\infty}(x,v)\right|\lesssim \frac{\log^6(2+t)}{2+t}.
    \end{equation*}
    \end{itemize}
\end{theorem}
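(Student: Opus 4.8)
The four assertions build on one another, and the common engine is the method of characteristics fed by the field decay of Hypothesis~\ref{hypothese_suffisante}. Fix a species $\alpha$ and let $s\mapsto(X,V)(s;t,x,v)$ be the characteristic of \eqref{equation_RVM1} through $(x,v)$ at time $t$, so that $\dot X=\widehat{V_\alpha}$, $\dot V=e_\alpha(E+\widehat{V_\alpha}\times B)(s,X)$, and $f_\alpha$ is constant along it. I would first establish \eqref{equation_support_f_alpha} by a continuity argument: as long as $|V|\le\beta$ one has $|X|\lesssim\widehat{\beta}_{max}s+k$, hence $s-|X|+2k\gtrsim(1-\widehat{\beta}_{max})s$, and \eqref{equation_estimee_EB_Glassey} bounds the force by $C_0/s^2$; integrating in $s$ shows the total variation of $V$ is finite and, for small data, remains below a fixed $\beta$, which closes the bootstrap. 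The spatial bound then follows from $|\dot X|=|\widehat{V_\alpha}|\le\widehat{\beta}_{max}$.

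For \eqref{equation_corollaire_estimee_Q_infini} I would set $g_\alpha(t,v):=\int_{\R^3_x}f_\alpha(t,x,v)\,\dx$. Integrating \eqref{equation_RVM1} in $x$ annihilates the transport term, and since $(E,B)$ are independent of $v$ while $\widehat{v_\alpha}=\nabla_v v_\alpha^0$ makes $\widehat{v_\alpha}\times B$ divergence-free in $v$, an integration by parts gives the conservative form
\begin{equation*}
    \partial_t g_\alpha=-e_\alpha\,\nabla_v\cdot\int_{\R^3_x}(E+\widehat{v_\alpha}\times B)\,f_\alpha\,\dx.
\end{equation*}
The naive bound on the right-hand side fails because $\nabla_v f_\alpha$ grows linearly in $t$; the point is that, after moving to the frame travelling with the characteristic, the $v$-derivative falls on the spatial argument of the field and produces a factor $t\,\nabla_x(E,B)$, which by \eqref{equation_estimee_gradEB_Glassey} decays like $\log(t)/t^2$ and is therefore integrable. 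Summing the tail yields convergence of $g_\alpha$ to a profile $Q_\infty^\alpha$ at the rate $\log^5(2+t)/(2+t)$. The spatial density \eqref{equation_estimee_Q_infini_v} is then deduced from this convergence by the dispersive change of variables $v\mapsto x=t\widehat{v_\alpha}$: particles of momentum $v$ concentrate near $x=t\widehat{v_\alpha}$, and the reciprocal Jacobian $t^{-3}m_\alpha^3\langle\widecheck{x/t}\rangle^5$ turns the $v$-marginal $\int f_\alpha\,\dv$ into $t^{-3}m_\alpha^3\langle\widecheck{x/t}\rangle^5\,Q_\infty^\alpha(\widecheck{x/t})$, up to a $\log^6(2+t)/(2+t)$ error accounting for the spread around the ray.

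For the field asymptotics \eqref{equation_estimee_E_bb} I would use the Glassey--Strauss representation of $(E,B)$ through the retarded kernel, splitting it into the homogeneous part carried by $(E_0,B_0)$ — supported in a light-cone shell, hence negligible in the interior $|x|\le\beta_{max}t$ for large $t$ — and the part sourced by $(\rho,j)$. The leading contribution along a ray $x=t\widehat v$ is the Coulombic $1/t^2$ term, whose limit defines $(\E,\mathbb B)(v)$ in terms of the asymptotic charge built in the previous step; the radiation ($1/r$) part is lower order here because, for small data, the acceleration $E+\widehat{v_\alpha}\times B$ is itself $O(t^{-2})$, so the current is integrable enough to be subdominant inside the cone. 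Tracking the remainders gives the rate $\log^6(2+t)/(2+t)$.

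Finally, modified scattering is obtained by integrating the characteristics to one further order. Along $X(s)\approx s\widehat{v_\alpha}$, estimate \eqref{equation_estimee_E_bb} gives $\dot V\approx e_\alpha s^{-2}\big(\E(v_\alpha)+\widehat{v_\alpha}\times\mathbb B(v_\alpha)\big)$, so $V(s)$ converges to an asymptotic momentum with a correction of size $\sim 1/s$; feeding this $1/s$-correction into $X(t)=\int^t\widehat{V_\alpha}(s)\,\mathrm ds$ generates a genuinely logarithmic shift $\log(t)\tfrac{e_\alpha}{m_\alpha}\tfrac1{v_\alpha^0}\big(\E(v_\alpha)+\widehat{v_\alpha}\times\mathbb B(v_\alpha)\big)$ together with the conjugate time shift. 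Since $f_\alpha$ is constant along the flow, inverting this modified trajectory produces the limit $f_{\alpha\infty}$ at the announced rate. I expect the genuinely delicate steps to be the Maxwell estimate \eqref{equation_estimee_E_bb}, where one must prove the radiation field does not spoil the interior $1/t^2$ expansion, and the bookkeeping of the logarithmic correction both in \eqref{equation_corollaire_estimee_Q_infini} and in the modified scattering: it is precisely the non-summable $1/s$ decay of the velocity correction — rather than a faster rate — that forces the logarithm and, ultimately, obstructs linear scattering.
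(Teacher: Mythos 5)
This theorem is not proved in the paper at all: it is explicitly \emph{recalled} from the reference \cite{breton2025} (``We recall a series of useful results from \cite{breton2025}''), so there is no in-paper argument to compare your proposal against. Your sketch does reconstruct the broad strategy of that cited literature --- characteristics driven by the decaying Lorentz force for \eqref{equation_support_f_alpha} and \eqref{equation_corollaire_estimee_Q_infini}, a Glassey--Strauss-type representation for \eqref{equation_estimee_E_bb}, and integration of the $O(s^{-2})$ asymptotic force to produce the non-summable $O(s^{-1})$ velocity correction and hence the logarithmic shift in the modified characteristics. Your identification of the two delicate points (the interior field expansion and the origin of the logarithm) is accurate, and the conservative form $\partial_t g_\alpha=-e_\alpha\nabla_v\cdot\int(E+\widehat{v_\alpha}\times B)f_\alpha\,\dx$ is correct because $\widehat{v_\alpha}$ is a gradient in $v$.

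That said, as a proof the proposal has a genuine gap already in the first step. Your bootstrap for the velocity support reads: assuming $|V|\leq\beta$ one gets $s-|X|+2k\gtrsim(1-\widehat{\beta}_{max})s$, hence a force bound $C_0/s^2$ whose time integral is finite, ``which closes the bootstrap.'' But the implied constant is $C_0/(1-\widehat{\beta}_{max})$, and $1-\widehat{\beta}_{max}\sim m^2/(2\beta^2)$ degenerates as $\beta\to\infty$; the resulting self-consistency inequality is roughly $\beta\leq k+C\,C_0\beta^2$, which does not close for a fixed but large $C_0$. Hypothesis \ref{hypothese_suffisante} only posits the existence of \emph{some} $C_0$, with no smallness, so your parenthetical ``for small data'' is doing load-bearing work that the hypothesis does not supply; the cited paper must (and does) argue differently here. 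The remaining bullets are likewise outlines rather than proofs: the specific rates $\log^5(2+t)/(2+t)$ and $\log^6(2+t)/(2+t)$ require controlling the logarithmic growth of $\nabla_{x,v}$ of the profile $g_\alpha(t,x,v)=f_\alpha(t,x+t\widehat{v_\alpha},v)$, and the claim that the radiation part of the Glassey--Strauss representation is subdominant along interior rays is precisely the hard estimate, asserted here rather than established. None of this is a wrong \emph{approach}, but the proposal should be read as a plausible roadmap to the results of \cite{breton2025}, not as a self-contained proof.
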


In the following, we will consider 
\begin{equation}
    \label{equation_definition_Q_infini}
    Q_\infty:=\sum_{1\leq \alpha \leq N}e_\alpha m_\alpha^3 Q_\infty^\alpha.
\end{equation}
This function will govern the asymptotic behavior of the current and charge density and will give us a necessary and sufficient condition for linear scattering. We now state our main result.

\begin{theorem}
    Let $(f_\alpha,E,B)$ be a solution to \eqref{equation_RVM1} that satisfies Hypothesis \ref{hypothese_suffisante}. Assume $Q_\infty\neq 0$. Then, there exists $\alpha\in\{1,\dots,N\}$ such that $f_\alpha$ does not satisfy a linear scattering dynamic. That is, for this specific $\alpha$, there exists no function $g_{\alpha\infty}\in L^1(\R^3_x\times\R^3_v)$ such that 
    \begin{equation}
        f_\alpha(tv^0_\alpha,x+tv,v)\xrightarrow[t\rightarrow +\infty]{L^1(\R^3_x\times\R^3_v)} g_{\alpha\infty}(x,v).
        \label{equation_fausse_linear_scattering1}
    \end{equation}
    \label{theoreme_pas_de_scattering_lineaire}
\end{theorem}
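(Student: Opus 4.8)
The plan is to argue by contradiction: assuming that \emph{every} species admits a linear scattering limit $g_{\alpha\infty}\in L^1$ as in \eqref{equation_fausse_linear_scattering1}, I will deduce $Q_\infty=0$, against the hypothesis. The driving mechanism is the tension between the free‑transport (linear) profile and the modified profile of Theorem \ref{theoreme_anciens_resultats}: the latter differs from free transport by a spatial shift growing like $\log t$, and an $L^1$ limit in the free frame cannot absorb such a shift unless it is trivial on the support of the asymptotic charge. Writing both statements in the free frame via $G_\alpha(\tau,x,v):=f_\alpha(\tau v^0_\alpha,\,x+\tau v,\,v)$, linear scattering reads $G_\alpha(\tau,\cdot,\cdot)\to g_{\alpha\infty}$ in $L^1_{x,v}$, while the last item of Theorem \ref{theoreme_anciens_resultats}, after folding the time shift into $\tau$ and reparametrising so that both profiles are read at the \emph{same} self‑similar time $\tau$, becomes
\[
\Big| G_\alpha\big(\tau,\,x+\log(\tau)\,W_\alpha(v),\,v\big)-f_{\alpha\infty}(x,v)\Big|\lesssim\frac{\log^6(2+\tau)}{2+\tau},\qquad W_\alpha(v):=\frac{e_\alpha}{m_\alpha v^0_\alpha}\Big(-\E(v_\alpha)+(\E(v_\alpha)\cdot\widehat{v_\alpha})\widehat{v_\alpha}-\widehat{v_\alpha}\times\mathbb{B}(v_\alpha)\Big).
\]

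\emph{Step A (reconciling the two dynamics).} Since $f_\alpha\ge0$ and $G_\alpha(\tau,\cdot,\cdot)\to g_{\alpha\infty}$ in $L^1$, the family $\{G_\alpha(\tau,\cdot,\cdot)\}_\tau$ is tight in $x$: for every $\varepsilon$ there are $R,\tau_0$ with $\int_{|x|>R}\int_{\R^3_v}G_\alpha(\tau,x,v)\,dv\,dx<\varepsilon$ for $\tau\ge\tau_0$. Suppose, for contradiction, $W_\alpha(v_0)\neq0$ at some $v_0$ with $Q^\alpha_\infty(v_\alpha)>0$ for $v$ near $v_0$. On a small ball $B(v_0,r)$ one has $|W_\alpha|\ge c>0$, so by the modified scattering bound the mass of $G_\alpha(\tau,\cdot,v)$ is concentrated near $|x|\asymp\log(\tau)|W_\alpha(v)|\to+\infty$; by \eqref{equation_corollaire_estimee_Q_infini} the total mass $\int G_\alpha(\tau,x,v)\,dx\to Q^\alpha_\infty(v_\alpha)$, so a fixed positive amount $\asymp\int_{B(v_0,r)}Q^\alpha_\infty(v_\alpha)\,dv$ escapes every bounded set, contradicting tightness. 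Hence $W_\alpha\equiv0$ on $\{Q^\alpha_\infty>0\}$. Taking the union over the (by assumption all linearly scattering) species, and observing that $W_\alpha=0$ forces the longitudinal part to vanish through the relativistic factor $1-|\widehat{v_\alpha}|^2\neq0$, I obtain, on $S:=\bigcup_\alpha\{w:Q^\alpha_\infty(w)>0\}$,
\[
\E(w)+\widehat{w}\times\mathbb{B}(w)=0,\qquad\text{hence also}\qquad \E(w)\cdot\widehat{w}=0 .
\]
These are exactly the statements that the longitudinal and transverse components of the boosted electric field vanish in the frame moving with velocity $\widehat{w}$; this is where the Lorentz invariance of \eqref{equation_RVM1} enters.

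\emph{Step B (from the field identities to $Q_\infty=0$).} I feed these relations into the self‑similar form of Maxwell's equations. With $E(t,tz)\simeq t^{-2}\mathbf E(z)$, $B(t,tz)\simeq t^{-2}\mathbf B(z)$ where $\mathbf E(z)=\E(\widecheck z)$, $\mathbf B(z)=\mathbb{B}(\widecheck z)$ from \eqref{equation_estimee_E_bb}, and using \eqref{equation_estimee_Q_infini_v} to read off $\rho(t,tz)\simeq t^{-3}R(z)$ with $R(z):=\langle\widecheck z\rangle^5Q_\infty(\widecheck z)$ and $j\simeq z\rho$, the limiting equations are $\nabla_z\cdot\mathbf E=4\pi R$ and $\nabla_z\times\mathbf B=-2\mathbf E-(z\cdot\nabla_z)\mathbf E+4\pi zR$. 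On the open set $\widetilde{S}:=\{z:\widecheck z\in S\}$, which contains $\{R\neq0\}$, Step A gives $\mathbf E=-z\times\mathbf B$ and $z\cdot\mathbf E=0$. Substituting, using $\nabla_z\cdot(z\times\mathbf B)=-z\cdot(\nabla_z\times\mathbf B)$ and the identity $z_iz_j\partial_j\mathbf E_i=0$ obtained by differentiating $z\cdot\mathbf E=0$, all terms collapse to
\[
4\pi R\,(1-|z|^2)=0\quad\text{on }\widetilde{S}.
\]
Since $|z|<1$, this forces $R\equiv0$ on $\widetilde{S}\supseteq\{R\neq0\}$, hence $Q_\infty\equiv0$, the desired contradiction; therefore some $\alpha$ cannot satisfy \eqref{equation_fausse_linear_scattering1}.

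The main obstacle is the rigorous underpinning of Step B: justifying that the self‑similar profiles $\mathbf E,\mathbf B,R$ solve the reduced Maxwell system (by passing to the limit in the evolution equations with the rates of Theorem \ref{theoreme_anciens_resultats} and the gradient bound \eqref{equation_estimee_gradEB_Glassey}), and that the above manipulations are licit given only $\E,\mathbb{B}\in C^0$. I expect to run the curl computation in the sense of distributions on $\widetilde{S}$, so that $z_i\partial_j\mathbf E_i=-\mathbf E_j$ holds in $\mathcal{D}'(\widetilde{S})$ and may be contracted against the smooth weight $z_iz_j$ to yield the pointwise relation $R(1-|z|^2)=0$. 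A secondary technical point is the reparametrisation in Step A guaranteeing that the linear and modified profiles are compared at a common self‑similar time despite the velocity‑dependent time shift of the modified flow.
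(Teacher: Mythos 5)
Your Step A is, up to presentation, the paper's own Proposition \ref{proposition_Q_infini_nul_quand_L_non_nul}: the $\log t$ transverse shift $\widehat{v_\alpha}(\widehat{v_\alpha}\cdot\mathbb{L})-\mathbb{L}$ (your $W_\alpha$, which indeed vanishes iff $\mathbb{L}$ does, by dotting with $\widehat{v_\alpha}$ and using $|\widehat{v_\alpha}|<1$) expels the mass from every bounded set, so an $L^1$ limit forces $Q_\infty^\alpha=0$ wherever $\mathbb{L}\neq0$; your tightness phrasing and the paper's a.e.-subsequence phrasing are interchangeable. Step B, however, is a genuinely different route. The paper only uses Gauss's law, in integral form over a ball centred at the origin (Lemma \ref{lemme_lien_entre_Q_et_E_bb}), which handles the case $Q_\infty(0)\neq0$; the general case is then reduced to this one by a Lorentz boost, and the bulk of the paper (Section \ref{section_lorentz_boosts}) is devoted to showing that Hypothesis \ref{hypothese_suffisante}, the scattering assumption \eqref{equation_fausse_linear_scattering1}, and $Q_\infty\neq0$ survive the boost. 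You instead combine the self-similar Gauss \emph{and} Amp\`ere laws into the pointwise identity $R(1-|z|^2)=0$ on the open set where $\mathbf{E}=-z\times\mathbf{B}$; your algebra is correct (and the two conditions are not independent: $z\cdot\mathbf{E}=0$ follows from $\mathbf{E}=-z\times\mathbf{B}$), and it yields the contradiction at an arbitrary point, so no boost is needed. This is the differential incarnation of the same physics the paper exploits via Lorentz invariance: vanishing of the electric field in the comoving frame forces the comoving charge density to vanish.

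What this buys you is a much shorter path; what it costs you is exactly the obligation you flag, plus one you do not. First, the limiting Amp\`ere law must be established: this can be done in $\mathcal{D}'((1,2)\times\widetilde{S})$ by setting $u_\lambda(s,z):=(\lambda s)^2E(\lambda s,\lambda sz)$, rewriting Maxwell as $s\partial_s u_\lambda-2u_\lambda-(z\cdot\nabla_z)u_\lambda=\nabla_z\times b_\lambda-4\pi j_\lambda$, and letting $\lambda\to\infty$ using only the locally uniform convergence from \eqref{equation_estimee_E_bb} (the term $s\partial_su_\lambda$ dies after an integration by parts in $s$ because the limit is $s$-independent; no gradient bound on $\mathbf{E},\mathbf{B}$ is needed, and the subsequent contraction against $z_iz_j$ is legitimate in $\mathcal{D}'(\widetilde{S})$ since $R$ is continuous). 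Second, and not addressed in your write-up, is the asymptotic $t^3 j(t,tz)\to zR(z)$: Theorem \ref{theoreme_anciens_resultats} only provides the charge density limit \eqref{equation_estimee_Q_infini_v}, not the current. You need the velocity concentration $|x-t\widehat{v_\alpha}|\lesssim\log(2+t)$ on $\supp f_\alpha$ (available from \cite[Propositions 2.5--2.6]{breton2025}, cf.\ \eqref{equation_support_g_alpha_varphi}) to write $\int\widehat{v_\alpha}f_\alpha\,\mathrm{d}v=z\int f_\alpha\,\mathrm{d}v+O(t^{-4}\log t)$. With these two items supplied, your argument closes; without them Step B is only a heuristic.
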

\begin{remark}
    For the linear scattering statement \eqref{equation_fausse_linear_scattering1} to be Lorentz invariant, it is important to parameterize the characteristics as $t\mapsto (tv^0_\alpha,x+tv,v)$ instead of $t\mapsto(t,x+t\widehat{v_\alpha},v)$.
\end{remark}
\begin{remark}
    Our result can be rewritten as an equivalence. In fact, if $Q_\infty=0$ then $\E=\mathbb{B}=0$ and thus from Theorem \ref{theoreme_anciens_resultats} we derive linear scattering. Hence, linear scattering holds if and only if $Q_\infty=0$.
\end{remark}
\begin{remark}
    As stated above, linear scattering can occur only when $Q_\infty=0$. In particular, this means $Q_\infty^\gamma\in\mathrm{Span}\{Q_\infty^\alpha\,|\, 1\leq\alpha\leq N,\,\alpha\neq\gamma\}$, so that the scattering data lie in a codimension 1 submanifold. Using the wave operator constructed in \cite{bigorgne_2023_scatteringmap}, we can link $f_{\alpha0}$ with $Q_\infty^\alpha$ and thus prove that the same holds for the initial data.
\end{remark}

\subsection{Ideas of the proof}

Let $(f_\alpha,E,B)$ be a solution to \eqref{equation_RVM1} that satisfies Hypothesis \ref{hypothese_suffisante} and recall the definition of $Q_\infty$ in \eqref{equation_definition_Q_infini}. Furthermore, we assume that $Q_\infty\neq0$ and the densities exhibit a linear scattering dynamic, i.e. equation \eqref{equation_fausse_linear_scattering1} holds for any $\alpha$.\\
We want to find a contradiction. We will first observe that, under the linear scattering hypothesis, for all $v\in\R^3_v$ such that $\mathbb{L}(v):=\mathbb{E}(v)+\widehat{v}\times\mathbb{B}(v)\neq 0$, 
\begin{equation}
    Q_\infty(v)=0.
\end{equation}
Hence, to prove the theorem, we want to negate the linear scattering property by finding $v\in\R^3_v$ such that $Q_\infty(v)\neq 0$ and $\mathbb{L}(v)\neq 0$. We already know (see for instance \cite[Proposition 3.5]{breton2025}) that $\mathbb{L}$ can be expressed in terms of $Q_\infty$. This expression is rather complex, and it is unclear whether we can obtain $\mathbb{L}(v)\neq0$ with assumptions on $Q_\infty(v)$ or not. However, if we assume $Q_\infty(0)\neq0$, we will be able to prove that there exists $v\in\R^3_v$ such that
\begin{equation*}
    Q_\infty(v)\neq 0,\qquad \mathbb{L}(v)\neq 0.
\end{equation*}
We obtain this result by exploiting Gauss's law and the asymptotic behavior of $E$ given by \eqref{equation_estimee_E_bb}.
In general, we do not have $Q_\infty(0)\neq0$. However, as long as $Q_\infty\neq 0$, this is always true in a certain inertial frame. More precisely, there exists $v\in\R^3_v$ and a Lorentz transform $A\in\mathrm{SO}_0(3,1)$ such that 
\begin{equation*}
    Q_\infty(v)\neq 0,\qquad A(v^0,v)=(1,0,0,0).
\end{equation*}
We conclude the proof by exploiting the Lorentz invariance of \eqref{equation_RVM1} and by showing that Hypothesis \ref{hypothese_suffisante} and the linear scattering hypothesis \eqref{equation_fausse_linear_scattering1} are preserved by composing with Lorentz transforms.

\section{General results}
In all the following sections, we consider $(f_\alpha,E,B)$ a solution to \eqref{equation_RVM1} that satisfies Hypothesis \ref{hypothese_suffisante}. 
Here, we show that, when $Q_\infty(0)\neq 0$, there exists $v\in\R^3_v$ such that $\mathbb{L}(v):=\mathbb{E}(v)+\widehat{v}\times\mathbb{B}(v)\neq0$ and $Q_\infty(v)\neq 0$. To do so, we first introduce a lemma that provides another relation between $Q_\infty$ and $\mathbb{E}$, where $Q_\infty$ is defined in \eqref{equation_definition_Q_infini}.

\begin{lemma}
    Let $0<\delta\leq \widehat{\beta}_{max}=\max_{1\leq\alpha\leq N} \frac{\beta}{\sqrt{m_\alpha^2+\beta^2}}$. The following equality holds
    \begin{equation}
        4\pi\int_{B(0,\delta)}\langle\widecheck{x}\rangle^5Q_\infty\left(\widecheck{x}\right)\mathrm{d}x=\int_{\mathbb{S}^2_\delta} \E\left(\widecheck{\omega}\right)\cdot\frac{\omega}{|\omega|}\mathrm{d}\mu_{\mathbb{S}^2_\delta}(\omega).
    \end{equation}
    \label{lemme_lien_entre_Q_et_E_bb}
\end{lemma}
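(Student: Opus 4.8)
The plan is to apply Gauss's law $\nabla\cdot E=4\pi\rho$ on the ball $B(0,\delta t)$ at a fixed time $t$, and then to let $t\to\infty$, matching each side of the resulting flux identity with one of the two sides of the lemma through the asymptotic profiles \eqref{equation_estimee_Q_infini_v} and \eqref{equation_estimee_E_bb}. For every $t>0$, the divergence theorem combined with Gauss's law gives the exact identity
\[
4\pi\int_{B(0,\delta t)}\rho(t,x)\,\mathrm{d}x=\int_{\partial B(0,\delta t)}E(t,x)\cdot\frac{x}{|x|}\,\mathrm{d}S(x),\qquad \rho=\sum_{1\leq\alpha\leq N}e_\alpha\int_{\R^3_v}f_\alpha\,\mathrm{d}v.
\]
I would then show that, as $t\to\infty$, the left-hand side converges to $4\pi\int_{B(0,\delta)}\langle\widecheck{x}\rangle^5Q_\infty(\widecheck{x})\,\mathrm{d}x$ and the right-hand side to $\int_{\mathbb{S}^2_\delta}\E(\widecheck{\omega})\cdot\frac{\omega}{|\omega|}\,\mathrm{d}\mu_{\mathbb{S}^2_\delta}(\omega)$, which is exactly the claimed equality.

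For the left-hand side I would rescale by $x=ty$, so that $\mathrm{d}x=t^3\,\mathrm{d}y$ and $|x|<\delta t\Leftrightarrow|y|<\delta$, giving
\[
\int_{B(0,\delta t)}\rho(t,x)\,\mathrm{d}x=\int_{B(0,\delta)}\sum_{1\leq\alpha\leq N}e_\alpha\Big(t^3\int_{\R^3_v}f_\alpha(t,ty,v)\,\mathrm{d}v\Big)\,\mathrm{d}y.
\]
Since $\delta\leq\widehat{\beta}_{max}<1$, for $|y|\leq\delta$ the point $\widecheck{y}$ stays in a fixed compact set, so \eqref{equation_estimee_Q_infini_v} applies with $x=ty$ and shows that the inner bracket converges to $m_\alpha^3\langle\widecheck{y}\rangle^5Q_\infty^\alpha(\widecheck{y})$ uniformly on $B(0,\delta)$, with error $\lesssim\frac{\log^6(2+t)}{2+t}$. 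Summing against $e_\alpha$ and recalling the definition $Q_\infty=\sum_{1\leq\alpha\leq N}e_\alpha m_\alpha^3Q_\infty^\alpha$ from \eqref{equation_definition_Q_infini}, the integrand converges to $\langle\widecheck{y}\rangle^5Q_\infty(\widecheck{y})$, and integrating the error over the fixed ball $B(0,\delta)$ yields the desired limit.

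For the right-hand side I would parameterize $\partial B(0,\delta t)$ by $x=\delta t\,\theta$ with $\theta\in\mathbb{S}^2$, so that $\mathrm{d}S=(\delta t)^2\,\mathrm{d}\theta$ and $\frac{x}{|x|}=\theta$, which turns the flux into $\delta^2 t^2\int_{\mathbb{S}^2}E(t,\delta t\,\theta)\cdot\theta\,\mathrm{d}\theta$. To invoke \eqref{equation_estimee_E_bb} I write $\delta t\,\theta=t\widehat{v}$ with $v=\widecheck{\delta\theta}$; this is admissible since $\widehat{\widecheck{\delta\theta}}=\delta\theta$ and $|v|=\frac{\delta}{\sqrt{1-\delta^2}}\leq\beta_{max}$ whenever $\delta\leq\widehat{\beta}_{max}$. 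Then \eqref{equation_estimee_E_bb} gives $t^2E(t,\delta t\,\theta)\to\E(\widecheck{\delta\theta})$ uniformly in $\theta$, so the flux converges to $\delta^2\int_{\mathbb{S}^2}\E(\widecheck{\delta\theta})\cdot\theta\,\mathrm{d}\theta$; changing variables back to $\omega=\delta\theta\in\mathbb{S}^2_\delta$, with $\mathrm{d}\mu_{\mathbb{S}^2_\delta}(\omega)=\delta^2\,\mathrm{d}\theta$ and $\widecheck{\omega}=\widecheck{\delta\theta}$, identifies this limit with $\int_{\mathbb{S}^2_\delta}\E(\widecheck{\omega})\cdot\frac{\omega}{|\omega|}\,\mathrm{d}\mu_{\mathbb{S}^2_\delta}(\omega)$. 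Equating the two limits of the flux identity proves the lemma.

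The step I expect to require the most care is the uniform control needed to pass to the limit: I must check that the implicit constant in \eqref{equation_estimee_Q_infini_v} is uniform over the scaled ball (equivalently, over $\widecheck{y}$ for $|y|\leq\delta$), so that its contribution integrates to something vanishing, and likewise that the admissibility bound $|v|\leq\beta_{max}$ in \eqref{equation_estimee_E_bb} holds throughout $\partial B(0,\delta t)$ — this is precisely where the hypothesis $\delta\leq\widehat{\beta}_{max}$ is used, guaranteeing that the sphere of radius $\delta t$ remains inside the cone where the field profile $\E$ controls $E$. Once these uniformities are secured, the remainder is routine bookkeeping with the two changes of variables and their Jacobians.
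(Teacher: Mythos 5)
Your proposal is correct and follows essentially the same route as the paper: divergence theorem plus Gauss's law on $B(0,t\delta)$, then passing to the limit $t\to\infty$ using \eqref{equation_estimee_Q_infini_v} after the rescaling $x=ty$ on the charge side and \eqref{equation_estimee_E_bb} on the flux side. The uniformity you flag at the end is already guaranteed by the paper's convention that the implicit constants in $\lesssim$ depend on $(x,v)$ only through $k$, and your verification that $\delta\leq\widehat{\beta}_{max}$ implies $|\widecheck{\omega}|\leq\beta_{max}$ is exactly the point the paper uses.
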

\begin{proof}
    Let $\delta>0$. First, by the divergence theorem, we have
    \begin{equation*}
        \int_{B(0,t\delta)} \nabla\cdot E(t,x)\mathrm{d}x=\int_{\mathbb{S}^2_{t\delta}}E(t,\omega)\cdot \frac{\omega}{|\omega|}\mathrm{d}\mu_{\mathbb{S}^2_{t\delta}}=\int_{\mathbb{S}^2_{\delta}}t^2E(t,t\omega)\cdot \frac{\omega}{|\omega|}\mathrm{d}\mu_{\mathbb{S}^2_{\delta}}.
    \end{equation*}
    Now, recall \eqref{equation_estimee_E_bb} so that for any $\omega\in\mathbb{S}^2_{\delta}$, since $|\widecheck{\omega}|\leq\beta_{max}$,
    \begin{equation*}
        t^2E(t,t\omega)=\mathbb{E}(\widecheck{\omega})+ O\left(\frac{\log^6(2+t)}{2+t}\right).
    \end{equation*}
    Consequently,
    \begin{equation}
        \label{equation_egalite_div_E_E_bb}
         \int_{B(0,t\delta)} \nabla\cdot E(t,x)\mathrm{d}x=\int_{\mathbb{S}^2_{\delta}}\mathbb{E}(\widecheck{\omega})\cdot \frac{\omega}{|\omega|}\mathrm{d}\mu_{\mathbb{S}^2_{\delta}}+ O\left(\frac{\log^6(2+t)}{2+t}\right).
    \end{equation}
    However, by Gauss's law in \eqref{equation_RVM1}, we know that 
    \begin{equation}
        \label{equation_egalite_div_E_rho_preuve1}
         \int_{B(0,t\delta)} \nabla\cdot E(t,x)\mathrm{d}x=4\pi\int_{B(0,t\delta)}\rho(t,x)\mathrm{d}x=4\pi\int_{B(0,t\delta)}\sum_{1\leq\alpha\leq N} e_\alpha\int_{\R^3_v} f_\alpha(t,x,v) \mathrm{d}v\mathrm{d}x.
    \end{equation}
    Then, by \eqref{equation_estimee_Q_infini_v}, we have for $1\leq\alpha\leq N$
    \begin{equation*}
        \int_{\R^3_v} f_\alpha(t,x,v) \mathrm{d}v = \frac{1}{t^3}m_\alpha^3\left\langle\widecheck{\frac{x}{t}}\right\rangle^5 Q^{\alpha}_\infty\left(\widecheck{\frac{x}{t}}\right)+O\left(\frac{\log^6(2+t)}{(2+t)^4}\right).
    \end{equation*}
    This implies, after the change of variables $y\mapsto x/t$, 
    \begin{equation}
         4\pi\int_{B(0,t\delta)}\sum_{1\leq\alpha\leq N} e_\alpha\int_{\R^3_v} f_\alpha(t,x,v) \mathrm{d}v\mathrm{d}x= 4\pi\int_{B(0,\delta)}\langle\widecheck{x}\rangle^5 Q_\infty\left(\widecheck{x}\right)\mathrm{d}x+O\left(\frac{\log^6(2+t)}{2+t}\right).
    \end{equation}
    The result follows from \eqref{equation_egalite_div_E_E_bb} and \eqref{equation_egalite_div_E_rho_preuve1}.
\end{proof}

\begin{proposition}    
    \label{corollaire_Q_infini_L_non_nul}
   Assume $Q_\infty(0)\neq 0$. Then, there exists $v\in\R^3_v$ such that,
    \begin{equation*}
        Q_\infty(v)\neq 0,\quad \mathbb{L}(v)\neq 0.
    \end{equation*}
\end{proposition}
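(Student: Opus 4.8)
The plan is to construct the required $v$ directly from Lemma~\ref{lemme_lien_entre_Q_et_E_bb} by sending the radius $\delta$ to zero, so that the left-hand side is governed by the value $Q_\infty(0)$.

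First I would exploit the continuity of $Q_\infty$ (it is a finite linear combination of the $Q_\infty^\alpha\in C^0_c(\R^3_v)$) together with the assumption $Q_\infty(0)\neq 0$: since $\widecheck{\,\cdot\,}$ is continuous with $\widecheck{0}=0$, there is a radius $0<\delta_0\leq\widehat{\beta}_{max}$ such that $Q_\infty(\widecheck{x})$ keeps the constant sign of $Q_\infty(0)$ for every $x$ in the closed ball $\overline{B(0,\delta_0)}$ (note $|x|\leq\delta_0<1$, so $\widecheck{x}$ is well defined). Fixing any $0<\delta\leq\delta_0$ and using that the weight $\langle\widecheck{x}\rangle^5$ is strictly positive, the integrand on the left-hand side of Lemma~\ref{lemme_lien_entre_Q_et_E_bb} has constant nonzero sign on $B(0,\delta)$, hence
\[
    4\pi\int_{B(0,\delta)}\langle\widecheck{x}\rangle^5 Q_\infty(\widecheck{x})\,\mathrm{d}x\neq 0.
\]

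By Lemma~\ref{lemme_lien_entre_Q_et_E_bb} the surface integral on the right-hand side is then also nonzero, so by continuity of its integrand there exists $\omega_0\in\mathbb{S}^2_\delta$ with $\mathbb{E}(\widecheck{\omega_0})\cdot\frac{\omega_0}{|\omega_0|}\neq 0$. I set $v:=\widecheck{\omega_0}$. Since $\widecheck{\,\cdot\,}$ inverts $\widehat{\,\cdot\,}$ we have $\widehat{v}=\omega_0$, and because $|\omega_0|=\delta\leq\delta_0$ the choice of $\delta_0$ yields $Q_\infty(v)=Q_\infty(\widecheck{\omega_0})\neq 0$, which is the first desired property.

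The key algebraic point, and the step that really makes the statement work, is to pair $\mathbb{L}(v)$ with $\widehat{v}$ in order to annihilate the magnetic term: since $\widehat{v}\times\mathbb{B}(v)$ is orthogonal to $\widehat{v}$,
\[
    \mathbb{L}(v)\cdot\widehat{v}=\mathbb{E}(v)\cdot\widehat{v}+\big(\widehat{v}\times\mathbb{B}(v)\big)\cdot\widehat{v}=\mathbb{E}(v)\cdot\widehat{v}=|\omega_0|\,\mathbb{E}(\widecheck{\omega_0})\cdot\frac{\omega_0}{|\omega_0|}\neq 0,
\]
whence $\mathbb{L}(v)\neq 0$. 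The only genuine subtlety is arranging that a single radius $\delta$ simultaneously forces the left-hand side of the lemma to be nonzero \emph{and} keeps $Q_\infty(v)\neq 0$ at the point $v$ that comes out of the sphere; this is precisely why I first fix $\delta_0$ controlling the sign of $Q_\infty$ on the whole closed ball and then work at any scale $\delta\leq\delta_0$. Note that no scattering hypothesis enters here, only Gauss's law through Lemma~\ref{lemme_lien_entre_Q_et_E_bb} and the orthogonality of the magnetic force to the velocity.
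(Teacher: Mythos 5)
Your proof is correct and follows essentially the same route as the paper: shrink the ball so that $Q_\infty\circ\widecheck{\,\cdot\,}$ has a fixed sign, apply Lemma~\ref{lemme_lien_entre_Q_et_E_bb} to get a nonvanishing flux of $\mathbb{E}$ through $\mathbb{S}^2_\delta$, pick a point $v=\widecheck{\omega_0}$ where $\mathbb{E}(v)\cdot\widehat{v}\neq 0$, and conclude via $\mathbb{L}(v)\cdot\widehat{v}=\mathbb{E}(v)\cdot\widehat{v}$. Your write-up is if anything slightly more explicit than the paper's about why $Q_\infty(v)\neq 0$ at the selected point on the sphere.
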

\begin{proof}
    Since $Q_\infty$ is continuous, there exists $0<\delta\leq\widehat{\beta}_{max}$ such that for all $v\in \bar{B}(0,\delta)$, up to considering $-Q_\infty$,
    \begin{equation*}
        Q_\infty(\widecheck{v})>0 .
    \end{equation*}
    Using Lemma \ref{lemme_lien_entre_Q_et_E_bb}, this implies
    \begin{equation*}
        \int_{\mathbb{S}^2_\delta} \E\left(\widecheck{\omega}\right)\cdot\frac{\omega}{|\omega|}\mathrm{d}\mu_{\mathbb{S}^2_\delta}(\omega)>0.
    \end{equation*}
    It follows that there exists $v\in \R^3_v$ such that 
    \begin{equation*}
        Q_\infty(v)\neq 0,\quad \mathbb{E}(v)\cdot\widehat{v}\neq 0.
    \end{equation*}
    The result follows from the expression of $\mathbb{L}(v)=\E(v)+\widehat{v}\times\mathbb{B}(v)$, so that $\mathbb{L}(v)\cdot \widehat{v}=\mathbb{E}(v)\cdot \widehat{v}$.
\end{proof}


In the remainder of the article, we assume that the densities $f_\alpha$ satisfy the linear scattering assumption, that is, \eqref{equation_fausse_linear_scattering1} holds for any $\alpha$.

\begin{proposition}
    \label{proposition_Q_infini_nul_quand_L_non_nul}
    Let $v\in\R^3_v$ such that $\mathbb{L}(v)\neq 0$. Then, for any $1\leq\alpha\leq N$,
    \begin{equation}
        Q_\infty^\alpha(v)=0.
    \end{equation}
    In particular,
    \begin{equation}
        Q_\infty(v)=0.
    \end{equation}
    
\end{proposition}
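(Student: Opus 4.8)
The plan is to confront the two available descriptions of the asymptotic behaviour of $f_\alpha$: the modified scattering dynamic of Theorem \ref{theoreme_anciens_resultats}, which is a \emph{pointwise} statement with an explicit rate, and the assumed linear scattering \eqref{equation_fausse_linear_scattering1}, which is an $L^1$ statement. The guiding idea is that the logarithmic correction carried by the modified characteristics is incompatible with an $L^1$ limit precisely on the portion of velocity space where this correction is genuinely present, and that this incompatibility forces the mass of $f_\alpha$ to vanish there.

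First I would record the constraint that linear scattering imposes on the velocity marginal. Integrating \eqref{equation_fausse_linear_scattering1} in $x$ at fixed $v$ (the $L^1(\mathrm{d}x\,\mathrm{d}v)$ convergence passes to the $x$-integrals in $L^1(\mathrm{d}v)$), performing the change of variables $z=x+tv$, and invoking \eqref{equation_corollaire_estimee_Q_infini}, one gets, for almost every $v$,
\begin{equation*}
    \int_{\R^3_x} g_{\alpha\infty}(x,v)\,\mathrm{d}x=\lim_{t\to\infty}\int_{\R^3_z} f_\alpha(tv^0_\alpha,z,v)\,\mathrm{d}z=Q_\infty^\alpha(v_\alpha).
\end{equation*}
Thus the spatial integral of the hypothetical linear profile equals the asymptotic charge $Q_\infty^\alpha(v_\alpha)$.

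Next I would compute the pointwise limit of the same quantity $f_\alpha(tv^0_\alpha,x+tv,v)$ from modified scattering. Writing the linear characteristic point $(tv^0_\alpha,x+tv,v)$ as a point on a modified characteristic — that is, solving $\sigma v^0_\alpha-\log(\sigma)\tfrac{e_\alpha}{m_\alpha v^0_\alpha}\E(v_\alpha)\cdot\widehat{v_\alpha}=tv^0_\alpha$ for the modified time $\sigma$ and matching the spatial argument — the last bullet of Theorem \ref{theoreme_anciens_resultats} gives
\begin{equation*}
    f_\alpha(tv^0_\alpha,x+tv,v)=f_{\alpha\infty}\big(x+\log(t)\,\delta_\alpha(v)+o(1),\,v\big)+O\!\left(\frac{\log^6(2+t)}{2+t}\right),
\end{equation*}
where $\delta_\alpha(v)$ is the logarithmic drift that survives the matching. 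Since $f_{\alpha\infty}\in C^0_c$, wherever $\delta_\alpha(v)\neq0$ the argument escapes to infinity and the right-hand side tends to $0$. Extracting from \eqref{equation_fausse_linear_scattering1} an almost-everywhere convergent subsequence and identifying the two limits, I would conclude that $g_{\alpha\infty}(\cdot,v)=0$ on that set, whence $\int_{\R^3_x}g_{\alpha\infty}(x,v)\,\mathrm{d}x=0$; combined with the first step this forces $Q_\infty^\alpha(v_\alpha)=0$. After the substitution $v_\alpha\mapsto v$ and summing the result over $\alpha$, this yields $Q_\infty(v)=0$.

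The main obstacle is the precise identification of $\delta_\alpha(v)$, and this is exactly where the Lorentz-covariant parametrization $t\mapsto(tv^0_\alpha,x+tv,v)$ enters. The component of the full spatial shift $\log(t)\tfrac{e_\alpha}{m_\alpha v^0_\alpha}\mathbb{L}(v_\alpha)$ lying along the transport direction $\widehat{v_\alpha}$ is exactly cancelled by the time shift once one reaches the physical time $tv^0_\alpha$: it merely reparametrizes time along the characteristic and so cannot, by itself, obstruct linear scattering. Consequently the soft comparison above directly yields the conclusion only for the component of $\mathbb{L}(v_\alpha)$ transverse to $\widehat{v_\alpha}$, and the delicate point will be to upgrade this to \emph{every} $v$ with $\mathbb{L}(v)\neq0$; I expect this to require the explicit relation between $\mathbb{L}$ and $Q_\infty$ (as in \cite[Proposition 3.5]{breton2025}) rather than the soft argument alone. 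A secondary, more routine, issue is the passage between the pointwise-with-rate modified convergence and the $L^1$ linear convergence, which should be handled using the nonnegativity of $f_\alpha$, the uniform velocity support \eqref{equation_support_f_alpha}, and subsequence extraction.
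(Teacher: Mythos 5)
Your overall strategy --- playing the pointwise modified-scattering asymptotic of Theorem \ref{theoreme_anciens_resultats} against the assumed $L^1$ linear scattering, identifying $\int_{\R^3_x}g_{\alpha\infty}(x,v)\,\mathrm{d}x=Q_\infty^\alpha(v_\alpha)$ via \eqref{equation_corollaire_estimee_Q_infini}, and showing that a surviving logarithmic drift forces $g_{\alpha\infty}(\cdot,v)=0$ --- is exactly the paper's. However, there is a concrete error in your identification of the residual drift $\delta_\alpha(v)$, and it causes you to leave the proposition unproved precisely in the case you flag as delicate. You assert that the component of the spatial shift $\log(t)\frac{e_\alpha}{m_\alpha v^0_\alpha}\mathbb{L}(v_\alpha)$ along the transport direction is \emph{exactly} cancelled by the time shift. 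It is not: the linear characteristic moves at speed $|\widehat{v_\alpha}|<1$, so the logarithmic time delay $\log(t)\frac{e_\alpha}{m_\alpha v^0_\alpha}\,\mathbb{L}(v_\alpha)\cdot\widehat{v_\alpha}$ converts into the spatial displacement $\log(t)\frac{e_\alpha}{m_\alpha v^0_\alpha}\,\big(\mathbb{L}(v_\alpha)\cdot\widehat{v_\alpha}\big)\,\widehat{v_\alpha}$, which differs from the parallel part of $\mathbb{L}(v_\alpha)$ by the factor $|\widehat{v_\alpha}|^2$. Carrying out the matching shows that $\delta_\alpha(v)$ is a nonzero scalar multiple of
\begin{equation*}
\widehat{v_\alpha}\big(\widehat{v_\alpha}\cdot\mathbb{L}(v_\alpha)\big)-\mathbb{L}(v_\alpha)
=\big(|\widehat{v_\alpha}|^2-1\big)\,\mathbb{L}_\parallel(v_\alpha)-\mathbb{L}_\perp(v_\alpha),
\end{equation*}
where $\mathbb{L}=\mathbb{L}_\parallel+\mathbb{L}_\perp$ is the decomposition relative to $\widehat{v_\alpha}$. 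Since $|\widehat{v_\alpha}|^2-1=-1/(v^0_\alpha)^2\neq0$, this vector vanishes if and only if $\mathbb{L}(v_\alpha)=0$.

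Consequently the ``soft'' comparison already covers \emph{every} $v$ with $\mathbb{L}(v)\neq0$; no separate treatment of the purely parallel case is needed, and in particular you do not need the explicit relation between $\mathbb{L}$ and $Q_\infty$ from \cite[Proposition 3.5]{breton2025}. That fallback is in fact the route the paper explicitly declines to take, on the grounds that the expression is too complex to read off nonvanishing from assumptions on $Q_\infty(v)$, so it would not easily close the gap. As written, your argument establishes $Q_\infty^\alpha(v)=0$ only when $\mathbb{L}(v)$ has a nonzero component transverse to $\widehat{v}$ and defers the remaining, genuinely occurring, parallel case to an unexecuted harder argument: this is the gap. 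The secondary issues you mention (extracting an a.e.\ convergent subsequence from the $L^1$ limit, then using the continuity of $\mathbb{L}$ and $Q_\infty^\alpha$ together with the openness of $\{\mathbb{L}\neq0\}$ to pass from almost every $v$ to every $v$) are handled in the paper essentially as you sketch, with the compact support of the modified profile (rather than nonnegativity) doing the work.
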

\begin{proof}
    We begin by considering the densities $f_\alpha$ composed with the linear flow
    \begin{equation*}
        g_\alpha(t,x,v):=f_\alpha(t,x+t\widehat{v_\alpha},v).
    \end{equation*}
    Since $L^1$ scattering holds, we know that there exists a sequence $(t_k)_{k\in\N}$ such that, $t_k\xrightarrow[k\rightarrow +\infty]{}+\infty$ and 
    \begin{align*}
        g_\alpha(t_kv_\alpha^0,x,v)\xrightarrow[k\rightarrow +\infty]{}g_{\alpha\infty}(x,v),&\quad \textrm{ for almost all } (x,v)\in\R^3_x\times\R^3_v,\\
        \int_{\R^3_x}g_\alpha(t_kv^0_\alpha,x,v)\mathrm{d}x\xrightarrow[k\rightarrow +\infty]{}\int_{\R^3_x}g_{\alpha\infty}(x,v)\mathrm{d}x,&\quad \textrm{ for almost all } v\in\R^3_v.
    \end{align*}
    Using the convergence of $\int_{\R^3_x} g_\alpha(t,x,v)\mathrm{d}x=\int_{\R^3_x} f_\alpha(t,x,v)\mathrm{d}x$ in \eqref{equation_corollaire_estimee_Q_infini}, we have 
    \begin{equation*}
        Q_\infty^{\alpha}(v_\alpha)=\int_{\R^3_x}g_{\alpha\infty}(x,v)\mathrm{d}x,\quad \textrm{ for almost all } v\in\R^3_v.
    \end{equation*}
    As defined in \cite{breton2025}, let 
    \begin{equation*}
        h_\alpha(t,x,v):=g_\alpha\left(t,x+\frac{e_\alpha}{v^0_\alpha}\log(t)\Big[\widehat{v_\alpha}(\widehat{v_\alpha}\cdot\mathbb{L}(v_\alpha))-\mathbb{L}(v_\alpha)\Big],v\right).
    \end{equation*}
    By \cite[Corollary 3.12]{breton2025} we know that $h_\alpha$ is compactly supported. Moreover, 
    \begin{equation*}
        g_\alpha(t,x,m_\alpha v)=h_\alpha\left(t,x-\log(t)\frac{e_\alpha}{m_\alpha v^0}\Big[\widehat{v}(\widehat{v}\cdot\mathbb{L}(v))-\mathbb{L}(v)\Big],m_\alpha v\right),
    \end{equation*} 
    and then $g_\alpha(t,x,m_\alpha v)\xrightarrow[t\rightarrow +\infty]{}~0$ when $\left[\widehat{v}(\widehat{v}\cdot\mathbb{L}(v))-\mathbb{L}(v)\right]\neq 0$. Finally, since $\left[\widehat{v}(\widehat{v}\cdot\mathbb{L}(v))-\mathbb{L}(v)\right]\neq 0$ whenever $\mathbb{L}(v)\neq 0$, we derive 
    \begin{equation*}
        g_\alpha(t,x,m_\alpha v)\xrightarrow[t\rightarrow +\infty]{}0.
    \end{equation*}
    Hence, for almost all $(x,v)\in\R^3_x\times\R^3_v$ such that $ \mathbb{L}(v)\neq 0$, 
    \begin{equation*}
        g_{\alpha\infty}(x,m_\alpha v)=0.
    \end{equation*}
    Note that here, by continuity of $\mathbb{L}$, the equality holds at least on a small ball, which is of positive measure. Hence, for almost all $v\in\R^3_v$ such that $ \mathbb{L}(v)\neq 0$,
    \begin{equation*}
        Q_\infty^\alpha(v)=0.
    \end{equation*}
    Since both functions are continuous, the equality holds for all $v$ verifying the property. The result follows.
\end{proof}

\section{The restricted Lorentz group}

We begin by recalling that we consider $(f_\alpha,E,B)$ a solution to \eqref{equation_RVM1} that satisfies Hypothesis \ref{hypothese_suffisante}. Moreover, we assume that the densities $f_\alpha$ satisfy the linear scattering hypothesis \eqref{equation_fausse_linear_scattering1} and that $Q_\infty\neq 0$. We recall that the final goal is to reach a contradiction. In order to do so, we begin by introducing the restricted Lorentz group and giving general properties. 
\label{section_lorentz_boosts}

\subsection{General properties of the restricted Lorentz group}

We consider the restricted Lorentz group $\mathrm{SO}_0(3,1)$. It contains Lorentz transformations that preserve the orientation of space and the direction of time. We recall that Lorentz transformations are linear isometries of the Minkowski spacetime $(\R^{1+3},\eta)$, where $\eta:=\mathrm{diag}(-1,1,1,1)$ is the Minkowski metric. The restricted Lorentz group is generated by rotations, defined as 
\begin{equation*}
    R:=\left(\begin{array}{c|ccc}
        1& 0 & 0 & 0  \\
        \hline
        0 &  &  & \\
     0&&\widetilde{R}&\\
      0& & &
    \end{array}\right),
\end{equation*}
where $\widetilde{R}\in \mathrm{SO_3(\R)}$ is a 3D rotation, and Lorentz boosts $A_\varphi$ on the first spatial coordinate, defined as 
\begin{equation*}
    A_{\varphi}:=\begin{pmatrix}
        \cosh(\varphi)& \sinh(\varphi) & 0 & 0  \\
     \sinh(\varphi) & \cosh(\varphi) & 0 & 0 \\
     0&0&1&0 \\
     0&0&0&1
    \end{pmatrix},
\end{equation*}
where $\varphi\in\R$. We note that $A_\varphi^{-1}=A_{-\varphi}$. More precisely, the following proposition holds.
\begin{proposition}
    \label{proposition_ecriture_boosts}
    Let $A\in\mathrm{SO}_0(3,1)$. There exist $\varphi\in\R$ and rotations $R_1,R_2\in\mathrm{SO_0(3,1)}$ such that
    \begin{equation}
        A=R_1 A_{\varphi}R_2.
    \end{equation}
\end{proposition}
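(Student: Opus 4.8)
The plan is to obtain this as a Cartan-type ($KAK$) decomposition, realized concretely by tracking the image of the future-directed unit timelike vector $\mathbf{e}_0:=(1,0,0,0)$ under $A$. The key intuition is that $A_\varphi$ is the only ingredient of the product $R_1A_\varphi R_2$ that moves $\mathbf{e}_0$ in a nontrivial direction, while the rotations either fix $\mathbf{e}_0$ (the right factor $R_2$) or merely rotate its spatial part (the left factor $R_1$). Consequently the data $(\varphi,R_1,R_2)$ can be read off directly from the single vector $A\mathbf{e}_0$.

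First I would record that, since $A\in\mathrm{SO}_0(3,1)$ preserves $\eta$ and the time orientation, the vector $A\mathbf{e}_0=:(a^0,\vec a)$ with $\vec a\in\R^3$ is again future-directed unit timelike; hence $a^0=\sqrt{1+|\vec a|^2}\geq 1$. Since $A_\varphi\mathbf{e}_0=(\cosh\varphi,\sinh\varphi,0,0)$ and every rotation fixes $\mathbf{e}_0$, any decomposition $A=R_1A_\varphi R_2$ forces $A\mathbf{e}_0=R_1A_\varphi\mathbf{e}_0$. This dictates the choices: let $\varphi\geq 0$ be the unique real number with $\cosh\varphi=a^0$, so that $\sinh\varphi=|\vec a|$; and let $R_1$ be the rotation whose spatial block $\widetilde R_1\in\mathrm{SO}_3(\R)$ sends $(|\vec a|,0,0)$ to $\vec a$ (such a rotation exists because the two vectors have equal Euclidean norm, and when $\vec a=0$ one simply takes $\varphi=0$, $R_1=\mathrm{Id}$). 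By construction $R_1A_\varphi\mathbf{e}_0=R_1(\cosh\varphi,\sinh\varphi,0,0)=(a^0,\vec a)=A\mathbf{e}_0$.

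It then remains to set $R_2:=A_{-\varphi}R_1^{-1}A$ and to verify that it is a rotation, which immediately yields $R_1A_\varphi R_2=R_1A_\varphi A_{-\varphi}R_1^{-1}A=A$ since $A_\varphi^{-1}=A_{-\varphi}$. By the previous step, $R_2\mathbf{e}_0=A_{-\varphi}R_1^{-1}(A\mathbf{e}_0)=A_{-\varphi}R_1^{-1}R_1A_\varphi\mathbf{e}_0=\mathbf{e}_0$, so $R_2$ fixes $\mathbf{e}_0$. The structural fact I would then prove is that any $B\in\mathrm{SO}_0(3,1)$ fixing $\mathbf{e}_0$ is a rotation: for $w$ with $w^0=0$ one has $\eta(\mathbf{e}_0,Bw)=\eta(B\mathbf{e}_0,Bw)=\eta(\mathbf{e}_0,w)=0$, so $B$ preserves the spatial hyperplane $V=\{w^0=0\}$; on $V$ the metric $\eta$ restricts to the Euclidean inner product, hence $B|_V\in\mathrm{O}(3)$; and since $B$ is block diagonal with respect to $\R\mathbf{e}_0\oplus V$ with $\det B=1$, we get $\det(B|_V)=1$, i.e. $B|_V\in\mathrm{SO}_3(\R)$. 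Thus $B=R_2$ has exactly the block form of a rotation.

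The computation is short once these pieces are assembled; I expect the only genuinely delicate point to be the orientation bookkeeping in the final step, namely checking that the stabilizer of $\mathbf{e}_0$ in the \emph{restricted} group is precisely the rotation subgroup $\mathrm{SO}_3(\R)$ and not a larger subgroup of $\mathrm{O}(3)$, together with the elementary but necessary observation that the aligning spatial map $\widetilde R_1$ can indeed be taken inside $\mathrm{SO}_3(\R)$ rather than merely $\mathrm{O}(3)$.
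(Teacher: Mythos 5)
Your argument is correct and complete. Note that the paper states this proposition without proof, treating it as the standard Cartan ($KAK$) decomposition of $\mathrm{SO}_0(3,1)$, so there is nothing to compare against; your write-up is precisely the canonical argument one would supply. All the key points are in place: $A\mathbf{e}_0$ is future-directed unit timelike so $\varphi\geq 0$ with $\cosh\varphi=a^0$ exists and $\sinh\varphi=|\vec a|$; the aligning rotation $\widetilde R_1$ exists in $\mathrm{SO}_3(\R)$ by transitivity on spheres; and the stabilizer computation (preservation of the $\eta$-orthogonal complement of $\mathbf{e}_0$, positive definiteness of $\eta$ there, and the determinant bookkeeping) correctly identifies $R_2=A_{-\varphi}R_1^{-1}A$ as a rotation in the sense of the paper.
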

\begin{remark}
    In the following, since we can derive any Lorentz transformation of~~$\mathrm{SO_0(3,1)}$ by multiplying rotations and $A_\varphi$, we will limit ourselves to the study of $A_\varphi$ and rotations $R$.
\end{remark}
Let us give some properties of the restricted Lorentz group.
\begin{lemma}
    \label{lemme_metrique_minkowski}
    Recall the Minkowski metric $\eta=\mathrm{diag}(-1,1,1,1)$. For all $A\in\mathrm{SO_0(3,1)}$ and $X=(t,x)\in\R^{1+3}$, we have $\eta(AX,AX)=\eta(X,X)$. This rewrites as 
    \begin{equation*}
        |A^s(t,x)|^2-|A^0(t,x)|^2=|x|^2-|t|^2,
    \end{equation*}
    where $A^s(t,x):=\left(A^1(t,x),A^2(t,x),A^3(t,x)\right)\in\R^3$ is the spatial component of $A(t,x)=(A^0(t,x),A^s(t,x))\in\R^4$. Moreover,
    \begin{equation*}
        |A^1_{\varphi}(t,x)|^2-|A^0_{\varphi}(t,x)|^2=|x^1|^2-|t|^2.
    \end{equation*}
\end{lemma}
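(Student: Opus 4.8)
The plan is to reduce everything to the defining property of the Lorentz group, followed by a one-line hyperbolic identity. The first identity $\eta(AX,AX)=\eta(X,X)$ is in fact the very definition of a Lorentz transformation: since $\mathrm{SO}_0(3,1)$ is by construction a subgroup of the isometry group of $(\R^{1+3},\eta)$, every $A\in\mathrm{SO}_0(3,1)$ satisfies $A^T\eta A=\eta$, whence $\eta(AX,AX)=(AX)^T\eta(AX)=X^T(A^T\eta A)X=X^T\eta X=\eta(X,X)$. So I would treat this first claim as a mere unpacking of the isometry property recalled just above the statement, with nothing substantial to prove.

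Next I would translate the coordinate-free identity into the claimed componentwise form. Writing $X=(t,x)$ and using $\eta=\mathrm{diag}(-1,1,1,1)$, a direct expansion gives $\eta(X,X)=-|t|^2+|x|^2=|x|^2-|t|^2$. Applying the same expansion to $AX=(A^0(t,x),A^s(t,x))$ yields $\eta(AX,AX)=-|A^0(t,x)|^2+|A^s(t,x)|^2$. Equating the two, which is exactly the isometry property, produces the asserted $|A^s(t,x)|^2-|A^0(t,x)|^2=|x|^2-|t|^2$.

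Finally, for the ``moreover'' assertion about the boost, I would simply read the first two rows of the matrix $A_\varphi$ to obtain $A^0_\varphi(t,x)=\cosh(\varphi)\,t+\sinh(\varphi)\,x^1$ and $A^1_\varphi(t,x)=\sinh(\varphi)\,t+\cosh(\varphi)\,x^1$. Squaring and subtracting, the cross terms $2\sinh(\varphi)\cosh(\varphi)\,t\,x^1$ cancel and the remaining coefficients collapse via $\cosh^2(\varphi)-\sinh^2(\varphi)=1$ to give $|A^1_\varphi(t,x)|^2-|A^0_\varphi(t,x)|^2=-|t|^2+|x^1|^2$, as claimed. Alternatively I could note this is just the first identity restricted to the fact that $A_\varphi$ fixes $x^2,x^3$.

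I do not expect a genuine obstacle here: the statement is a bookkeeping consequence of the definition of $\mathrm{SO}_0(3,1)$ together with the single identity $\cosh^2-\sinh^2=1$. The only point requiring mild care is the sign convention of $\eta$, so that $\eta(X,X)$ comes out as $|x|^2-|t|^2$ rather than its negative; once that is fixed, every sign matches the statement.
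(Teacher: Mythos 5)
Your proof is correct and is exactly the standard verification: the paper states this lemma without any proof, treating it as an immediate consequence of the definition of $\mathrm{SO}_0(3,1)$ as isometries of $(\R^{1+3},\eta)$, which is precisely what you unpack. Both the componentwise rewriting and the boost computation via $\cosh^2(\varphi)-\sinh^2(\varphi)=1$ are accurate, including the sign conventions.
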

We will use that $\mathrm{SO}_0(3,1)$ acts transitively on the connected components of the hyperboloids.
\begin{lemma}
    Let $(t,x)\in\R_+\times\R^3_x$ with $|x|<t$. Then, there exists $A\in\mathrm{SO}_0(3,1)$, depending on $(t,x)$, such that 
    \begin{equation*}
        A(t,x)=\left(\sqrt{t^2-|x|^2},0,0,0\right).
    \end{equation*}
    \label{lemme_existence_A_nul_en_espace}
\end{lemma}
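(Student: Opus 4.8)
The plan is to prove Lemma \ref{lemme_existence_A_nul_en_espace} by explicit construction, combining a spatial rotation with a single Lorentz boost. Let $(t,x)\in\R_+\times\R^3_x$ with $|x|<t$. The idea is first to rotate space so that $x$ lies along the first spatial axis, and then to apply a boost $A_\varphi$ tuned to annihilate that single spatial coordinate.

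First I would handle the degenerate case $x=0$, where we simply take $A=\mathrm{Id}$ and note $\sqrt{t^2-|x|^2}=t$. Assuming now $x\neq 0$, I would choose a rotation $R\in\mathrm{SO}_0(3,1)$ of the form displayed in Section \ref{section_lorentz_boosts}, with $\widetilde R\in\mathrm{SO}_3(\R)$ sending $x/|x|$ to the first basis vector $e_1$; such a rotation exists since $\mathrm{SO}_3(\R)$ acts transitively on the unit sphere. Since $R$ fixes the time coordinate and acts orthogonally on space, we get $R(t,x)=(t,|x|,0,0)$.

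Next I would apply a boost $A_\varphi$ on the first spatial coordinate. From the definition of $A_\varphi$, its action on $(t,|x|,0,0)$ yields first two components $\bigl(t\cosh\varphi+|x|\sinh\varphi,\ t\sinh\varphi+|x|\cosh\varphi\bigr)$, with the last two coordinates unchanged. To make the new first spatial coordinate vanish I need $t\sinh\varphi+|x|\cosh\varphi=0$, i.e. $\tanh\varphi=-|x|/t$. Because $|x|<t$, the quantity $-|x|/t$ lies in $(-1,1)=\mathrm{Ran}(\tanh)$, so a unique $\varphi\in\R$ exists; this is precisely the point where the hypothesis $|x|<t$ (timelike vector) is essential, and it is the only genuine constraint in the argument. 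Setting $A:=A_\varphi R$, the resulting vector has spatial part $(0,0,0)$, and its time component equals $\sqrt{t^2-|x|^2}$. I could compute this time component directly from $t\cosh\varphi+|x|\sinh\varphi$ using $\cosh\varphi=1/\sqrt{1-\tanh^2\varphi}=t/\sqrt{t^2-|x|^2}$ and $\sinh\varphi=-|x|/\sqrt{t^2-|x|^2}$, giving $(t^2-|x|^2)/\sqrt{t^2-|x|^2}=\sqrt{t^2-|x|^2}$.

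Alternatively, and more cleanly, once the spatial part is known to be zero I would invoke Lemma \ref{lemme_metrique_minkowski}: since $A\in\mathrm{SO}_0(3,1)$ preserves the Minkowski form, $|A^0(t,x)|^2-|A^s(t,x)|^2=|t|^2-|x|^2$, and with $A^s(t,x)=0$ this forces $|A^0(t,x)|^2=t^2-|x|^2$. As $A\in\mathrm{SO}_0(3,1)$ preserves the direction of time and $t\geq 0$, the component $A^0(t,x)$ is nonnegative, so $A^0(t,x)=\sqrt{t^2-|x|^2}$, completing the proof. There is no real obstacle here; the only subtle point is verifying solvability of $\tanh\varphi=-|x|/t$, which rests entirely on the timelike condition $|x|<t$.
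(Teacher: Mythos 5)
Your construction is correct: reducing to the first spatial axis by a rotation of $\mathrm{SO}_3(\R)$ and then solving $\tanh\varphi=-|x|/t$ (possible precisely because $|x|<t$) is the standard argument, and both of your ways of identifying the time component $\sqrt{t^2-|x|^2}$ — direct computation or invoking the invariance of the Minkowski form together with orthochronicity — are valid. The paper states this lemma without proof, but your rotate-then-boost decomposition is exactly what its Proposition \ref{proposition_ecriture_boosts} and the surrounding discussion implicitly rely on, so there is nothing to flag.
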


\subsection{Composing the solutions with the restricted Lorentz group}
\label{sous_section_def_lorentz_boosts}

As stated in the introduction, we want to compose our solution $(f_\alpha,E,B)$ with elements of the restricted Lorentz group. In order to do so, it is convenient to introduce the Faraday tensor $F$ associated to the electromagnetic field $(E,B)$. It is the antisymmetric matrix defined as 
\begin{equation*}
    F:= \begin{pmatrix}
            0 & E^1 & E^2 & E^3 \\
         -E^1 & 0 & B^3 & -B^2 \\
         -E^2 & -B^3 & 0 & B^1 \\
         -E^3 & B^2 & -B^1 & 0
        \end{pmatrix}.
\end{equation*}
In the following, we will write $(f_\alpha,F)$ as a solution to \eqref{equation_RVM1} instead of $(f_\alpha,E,B)$. Moreover, for convenience, we begin by uniquely extending $(f_\alpha,F)$ by 0 to $\{(t,x)\in\R\times\R^3_x\,|\, t\leq 0,\, |x|\geq -t +k\}\times\R^3_v$, with a slight abuse of notation since $F$ does not depend on $v\in\R^3_v$. Note that this is possible since this domain is globally hyperbolic. Then, $(f_\alpha,F)$ is now defined on $\Omega\times\R^3_v$, where 
\begin{equation*}
    \Omega:=\left(\R_+\times\R^3_x\right)\cup\{(t,x)\in\R\times\R^3_x\,|\, t\leq 0,\, |x|\geq -t +k\}.
\end{equation*}
Now, for $A\in\mathrm{SO}_0(3,1)$, we compose $(f_\alpha,F)$ with $A$ and define
\begin{equation}
    \label{equation_premiere_definition_f_alpha_varphi}
    f_\alpha^A(t,x,v):=f_\alpha\left(A(t,x),A^s(v^0_\alpha,v)\right).
\end{equation}
Moreover, we define the new Faraday tensor as
\begin{equation}
    \label{equation_premiere_definition_F_varphi}
    F^A(t,x):=A^{-1}F(A(t,x))A^{-1},
\end{equation}
and then denote the associated electromagnetic field by $(E^A,B^A)$, which is, roughly speaking, given by the cartesian components of $F^A$. By the Lorentz invariance of the Vlasov-Maxwell system, $(f^A,F^A)$ is a solution to \eqref{equation_RVM1} on $A^{-1}\Omega\times\R^3_v$. As we shall see, $[T_A,+\infty)\times\R^3_x\times\R^3_v\subset A^{-1}\Omega\times\R^3_v$ for $T_A>0$ large enough. This can be seen on Figure \ref{figure_domaine_apres_composition} below.
    \begin{figure}[h!]
    \centering
    \begin{subfigure}{0.4\textwidth}
       \begin{tikzpicture}[thick,scale=1, every node/.style={scale=1}]
            \draw[red,dashed, very thick] (-3,2)--(3,2);
            \draw (3,2) node [above]{$t=T_\varphi$};
            \draw[->] (-3.1,0) -- (3.1,0);
            \draw (3.1,0) node[right] {$x$};
            \draw [->] (0,-1) -- (0,3.1);
            \draw (0,3.1) node[above] {$t$};
            
        \end{tikzpicture}
        \caption{Representation of $t=T_\varphi$ in the inertial frame $(t,x)$ \\ \\}
        \label{figure_domaine}
    \end{subfigure}
    \begin{subfigure}{0.4\textwidth}
    \begin{tikzpicture}[thick,scale=1, every node/.style={scale=1}]
            \fill[fill=gray!10]
            plot[domain=1:2](0.5*\x,{-abs(\x)+1})
            --(2,-1)--(3,-1)
            -- (3,0)
            -- cycle;
            \fill[fill=gray!10]
            plot[domain=-1:-2](0.5*\x,{-abs(\x)+1})
            --(-2,-1)--(-3,-1)
            -- (-3,0)
            -- cycle;
            \fill[fill=gray!10]
            (-3,0)--(-3,3)--(3,3)--(3,0)
            -- cycle;
            \fill[fill=gray!40]
            plot[domain=-4:-1](0.5*\x, {abs(\x)-1)})
            --plot[domain=1:4](0.5*\x, {abs(\x)-1)})
            --cycle ;
            \draw [dashed]plot[domain=0:3](0.5*\x,{abs(\x)});
            \draw (1.5,3) node[above]{$t'=|x'|$};
            \draw [dashed]plot[domain=0:-3](0.5*\x,{abs(\x)});
            \draw [black, thick] plot[domain=-4:-1](0.5*\x, {abs(\x)-1)});
            \draw [black, thick] plot[domain=1:4](0.5*\x, {(abs(\x)-1)});
            \draw [black, thick] plot[domain=1:2](0.5*\x,{-abs(\x)+1});
            \draw [black, thick] plot[domain=-1:-2](0.5*\x,{-abs(\x)+1});
            \draw[dashed, red, very thick] plot[domain=-2.25:3.125](0.5*\x, {4/3-(sqrt(5)/3)*\x});
            \draw (1.5,-0.5) node[right]{$t=T_\varphi$};
            
            \draw (-2.5,2.5) node {$A_\varphi^{-1}\Omega$};

            
            \draw[->] (-3.1,0) -- (3.1,0);
            \draw (3.1,0) node[right] {$x'$};
            \draw [->] (0,-1) -- (0,3.1);
            \draw (0,3.1) node[above] {$t'$};
            
        \end{tikzpicture}
        \caption{Representation of the domain $A_\varphi^{-1}\Omega=\{(t,x)\in\R\times\R^3_x\,|\, A_\varphi(t,x)\in \Omega\}$ (in light and dark gray) and $t=T_{\varphi}$ in the inertial frame $(t',x')=A_\varphi(t,x)$ }
        \end{subfigure}
        \caption{Representation of $t=T_\varphi$ in the inertial frames $(t,x)$ and $(t',x')=A_\varphi(t,x)$ with $\varphi<0$}
        \label{figure_domaine_apres_composition}
\end{figure}
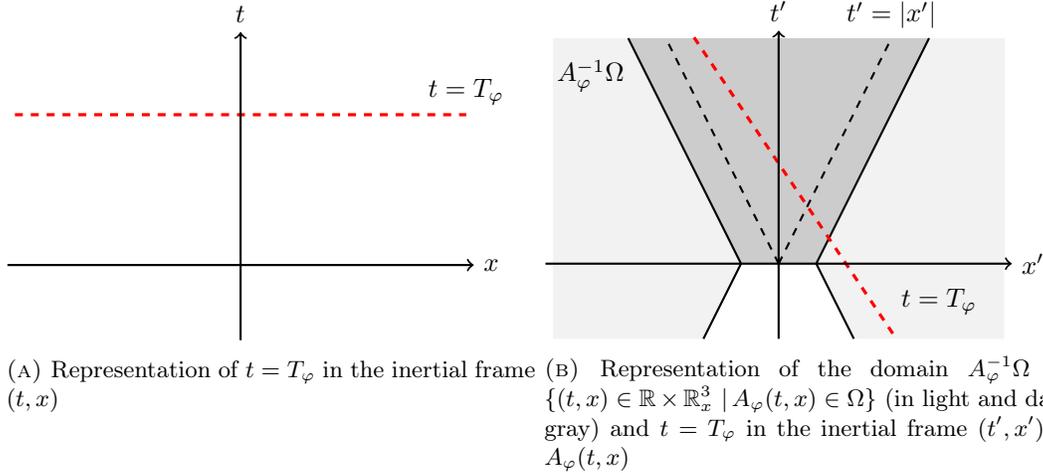

\subsection{The case of rotations}
We detail here what happens when we compose $(f_\alpha,F)$ with a rotation $R\in\mathrm{SO_0(3,1)}$. Let $R\in\mathrm{SO_0(3,1)}$ be a rotation and $(f^{R}_\alpha,F^{R})$ be defined as in \eqref{equation_premiere_definition_f_alpha_varphi}--\eqref{equation_premiere_definition_F_varphi}. We recall that $R$ is the block diagonal matrix composed of the blocks $1$ and $\widetilde{R}\in\mathrm{SO_3(\R)}$. Moreover, $R^0(t,x)=t$, and $\widetilde{R}$ preserves the Euclidean norm. Hence, $R^{-1}\Omega=\Omega$ and we can simply consider $T_R=0$. Equations \eqref{equation_premiere_definition_f_alpha_varphi}--\eqref{equation_premiere_definition_F_varphi} rewrite
\begin{equation*}
    f_\alpha^{R}(t,x,v)=f_\alpha\left(t,\widetilde{R}(x),\widetilde{R}(v)\right),\qquad F(t,x)=R^{-1}F(t,\widetilde{R}(x))R^{-1}.
\end{equation*}
In this case, it is straightforward to check that Hypothesis \ref{hypothese_suffisante} and the linear scattering hypothesis \eqref{equation_fausse_linear_scattering1} are verified.

\begin{proposition}
    \label{proposition_proprietes_conservees_rotations}
    $(f^{R}_\alpha,F^{R})$ satisfies Hypothesis \ref{hypothese_suffisante} and the linear scattering hypothesis \eqref{equation_fausse_linear_scattering1}. More precisely, for any $1\leq\alpha\leq N$, we have
    \begin{equation*}
        f^{R}_\alpha(tv^0_\alpha,x+tv,v)\xrightarrow[t\rightarrow +\infty]{L^1(\R^3_x\times\R^3_v)} g_{\alpha\infty}\big(\widetilde{R}(x),\widetilde{R}(v)\big).
    \end{equation*}
    Moreover, 
    \begin{equation*}
        \left|\int_{\R^3_x}f^{R}_\alpha(t,x,v)\mathrm{d}x - Q_\infty^\alpha\big(\widetilde{R}(v_\alpha)\big)\right|\lesssim \frac{\log^5(2+t)}{2+t}.
    \end{equation*}
\end{proposition}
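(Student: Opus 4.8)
The plan is to transfer every required property of $(f_\alpha^R,F^R)$ to the already-known property of $(f_\alpha,F)$, using only that $\widetilde{R}\in\mathrm{SO}_3(\mathbb{R})$ is a linear isometry: it preserves the Euclidean norm, has unit Jacobian, is linear (so it commutes with $v\mapsto v_\alpha=v/m_\alpha$), and rotates the electromagnetic field without changing its magnitude. The one preliminary fact I would isolate is that $v_\alpha^0=\sqrt{m_\alpha^2+|v|^2}$ depends on $v$ only through $|v|$, whence $(\widetilde{R}v)_\alpha^0=v_\alpha^0$; this is exactly what makes the relativistic rescaling $t\mapsto tv_\alpha^0$ compatible with composing by a rotation.

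First I would check Hypothesis~\ref{hypothese_suffisante}. For the support, $f_{\alpha0}^R(x,v)=f_{\alpha0}(\widetilde{R}x,\widetilde{R}v)$ with $|\widetilde{R}x|=|x|$ and $|\widetilde{R}v|=|v|$, so the support stays in $\{|x|\leq k,\ |v|\leq k\}$; similarly $F^R(0,\cdot)$ is supported in $\{|x|\leq k\}$ because conjugation by $R^{-1}$ touches only the tensor components, not the spatial argument. The constraints~\eqref{equation_condition_condition_initiale} then hold for free, since $(f^R,F^R)$ is a genuine solution of~\eqref{equation_RVM1} by Lorentz invariance. For the decay estimates~\eqref{equation_estimee_EB_Glassey}--\eqref{equation_estimee_gradEB_Glassey}, I would observe that $\widetilde{R}$ acts orthogonally on the cartesian components of the field, so that $|(E^R,B^R)(t,x)|=|(E,B)(t,\widetilde{R}x)|$ and, by the chain rule together with orthogonality, $|\nabla_x(E^R,B^R)(t,x)|=|\nabla(E,B)(t,\widetilde{R}x)|$; since $|\widetilde{R}x|=|x|$, the right-hand sides of~\eqref{equation_estimee_EB_Glassey}--\eqref{equation_estimee_gradEB_Glassey} are literally unchanged.

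Next I would handle the $Q_\infty^\alpha$ estimate. Writing $\int_{\mathbb{R}^3_x}f_\alpha^R(t,x,v)\,\mathrm{d}x=\int_{\mathbb{R}^3_x}f_\alpha(t,\widetilde{R}x,\widetilde{R}v)\,\mathrm{d}x$ and performing the unit-Jacobian substitution $y=\widetilde{R}x$ gives $\int_{\mathbb{R}^3_y}f_\alpha(t,y,\widetilde{R}v)\,\mathrm{d}y$. I would then apply~\eqref{equation_corollaire_estimee_Q_infini} at momentum $\widetilde{R}v$ and use $(\widetilde{R}v)_\alpha=\widetilde{R}(v_\alpha)$ to identify the limit as $Q_\infty^\alpha(\widetilde{R}v_\alpha)$ with the same decay rate. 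For the linear scattering, I would compute $f_\alpha^R(tv_\alpha^0,x+tv,v)=f_\alpha(tv_\alpha^0,\widetilde{R}x+t\widetilde{R}v,\widetilde{R}v)$, set $w:=\widetilde{R}v$ so that $v_\alpha^0=w_\alpha^0$, and note that the $L^1$ norm of $f_\alpha^R(tv_\alpha^0,x+tv,v)-g_{\alpha\infty}(\widetilde{R}x,\widetilde{R}v)$ equals, after the unit-Jacobian change $(y,w)=(\widetilde{R}x,\widetilde{R}v)$, the $L^1$ norm of $f_\alpha(tw_\alpha^0,y+tw,w)-g_{\alpha\infty}(y,w)$, which tends to $0$ by the scattering hypothesis~\eqref{equation_fausse_linear_scattering1} for the original solution. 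This simultaneously establishes~\eqref{equation_fausse_linear_scattering1} for $(f^R,F^R)$ and identifies its limit as $g_{\alpha\infty}(\widetilde{R}x,\widetilde{R}v)$.

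I do not expect a real obstacle: rotations are the easy elements of $\mathrm{SO}_0(3,1)$ precisely because they fix the time coordinate and act as Euclidean isometries on both $x$ and $v$. The only points demanding care are the bookkeeping of the relativistic rescaling (the invariance $(\widetilde{R}v)_\alpha^0=v_\alpha^0$) and verifying that conjugating the Faraday tensor by $R^{-1}$ leaves $|E|$ and $|B|$ unchanged; both are immediate consequences of orthogonality of $\widetilde{R}$. The mild subtlety worth stating explicitly is that each change of variables must be applied to $x$ and $v$ simultaneously and consistently with the rescaled time $tv_\alpha^0$.
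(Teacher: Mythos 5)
Your proposal is correct and follows essentially the same route as the paper: verify Hypothesis~\ref{hypothese_suffisante} from the fact that $\widetilde{R}$ preserves norms and has unit Jacobian, then obtain both the scattering limit and the $Q_\infty^\alpha$ estimate via the change of variables $(y,w)=(\widetilde{R}(x),\widetilde{R}(v))$ together with the key identity $\sqrt{m_\alpha^2+|\widetilde{R}(v)|^2}=v_\alpha^0$. The extra details you spell out (orthogonality acting on the field components and on the gradient, the compatibility of the rescaled time $tv_\alpha^0$ with the rotation) are exactly the points the paper leaves implicit, and they are all correct.
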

\begin{proof}
    Since $\widetilde{R}$ preserves the norm and has Jacobian 1, Hypothesis \ref{hypothese_suffisante} is directly verified.\\
    By the change of variables $(y,w)=(\widetilde{R}(x),\widetilde{R}(v))$, since $\sqrt{m_\alpha^2+|\widetilde{R}(v)|^2}=\sqrt{m_\alpha^2+|v|^2}=v_\alpha^0$, we have
    \begin{align*}
        \int_{\R^3_x\times\R^3_v} \left|f^{R}_\alpha(tv^0_\alpha,x+tv,v)-g_{\alpha\infty}\big(\widetilde{R}(x),\widetilde{R}(v)\big)\right|\mathrm{d}v\mathrm{d}x&=\int_{\R^3_x\times\R^3_v} \left|f_\alpha\left(tv^0_\alpha,\widetilde{R}(x+tv),\widetilde{R}(v)\right)-g_{\alpha\infty}\big(\widetilde{R}(x),\widetilde{R}(v)\big)\right|\mathrm{d}v\mathrm{d}x,\\
        &=\int_{\R^3_x\times\R^3_v} \left|f_\alpha(tv^0_\alpha,x+tv,v)-g_{\alpha\infty}(x,v)\right|\mathrm{d}v\mathrm{d}x.
    \end{align*}
    The convergence follows by the linear scattering hypothesis. Finally, the convergence of $\int_{\R^3_x}f_\alpha^R(t,x,v)\mathrm{d}x$ is a direct consequence of \eqref{equation_corollaire_estimee_Q_infini}.
\end{proof}


\subsection{First properties of the Lorentz boost}
In the following two subsections, we focus on the functions arising from the composition of $(f_\alpha,F)$ with the boost $A_\varphi$. As in \eqref{equation_premiere_definition_f_alpha_varphi}--\eqref{equation_premiere_definition_F_varphi}, we define
\begin{equation}
    \label{equation_deuxieme_definition_f_et_F_varphi}
    f_\alpha^\varphi(t,x,v):= f_\alpha\left(A_\varphi(t,x),A^s_\varphi(v^0_\alpha,v)\right),\qquad F^\varphi(t,x):=A_{-\varphi}F(A_\varphi(t,x))A_{-\varphi}.
\end{equation}
As stated above, these functions are defined on $A^{-1}\Omega\times\R^3_v$. We show that for $T_\varphi$ large enough, 
$[T_\varphi,+\infty)\times\R^3_x\times\R^3_v\subset A^{-1}\Omega\times\R^3_v$.
\begin{proposition}
    \label{proposition_hypothese_stable_par_lorentz_boost}
    There exists $T_\varphi>0$ large enough such that $(f_\alpha^\varphi,F^\varphi)$ is a solution to \eqref{equation_RVM1} on $[T_\varphi,+\infty)\times\R^3_x\times\R^3_v$. Moreover, $(f_\alpha^\varphi,F^\varphi)$ satisfies Hypothesis \ref{hypothese_suffisante}.
\end{proposition}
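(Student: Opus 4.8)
The plan is to prove the two assertions separately: first that Lorentz invariance upgrades to a genuine solution on a full forward slab $[T_\varphi,+\infty)\times\R^3_x\times\R^3_v$, and second that the resulting $(f_\alpha^\varphi,F^\varphi)$ meets both bullet points of Hypothesis \ref{hypothese_suffisante}. Writing $(t',x'):=A_\varphi(t,x)$, I would lean throughout on two elementary facts about the boost: the Minkowski invariance $(t')^2-|x'|^2=t^2-|x|^2$ of Lemma \ref{lemme_metrique_minkowski}, and the null-coordinate identities $t'-x'^1=e^{-\varphi}(t-x^1)$ and $t'+x'^1=e^{\varphi}(t+x^1)$, which follow immediately from the explicit form of $A_\varphi$.

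Since the paragraph preceding the statement already produces, by Lorentz invariance, a solution on $A_\varphi^{-1}\Omega\times\R^3_v$, the first assertion reduces to finding $T_\varphi$ with $[T_\varphi,+\infty)\times\R^3_x\subset A_\varphi^{-1}\Omega$, i.e. to pushing the slab $\{t\geq T_\varphi\}$ off $\Omega^c=\{(t',x')\,:\,t'<0,\ |x'|<-t'+k\}$. On $\Omega^c$ one has $|x'^1|\leq|x'|<-t'+k$ and $t'<0$, whence
\begin{equation*}
    t=\cosh(\varphi)t'-\sinh(\varphi)x'^1\leq\big(\cosh(\varphi)-|\sinh(\varphi)|\big)t'+|\sinh(\varphi)|k=e^{-|\varphi|}t'+|\sinh(\varphi)|k<|\sinh(\varphi)|k.
\end{equation*}
Hence any $T_\varphi>|\sinh(\varphi)|k$ works, which is precisely the regime drawn in Figure \ref{figure_domaine_apres_composition}. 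After the time translation $t\mapsto t-T_\varphi$, which preserves \eqref{equation_RVM1}, the initial slice sits at $t=0$ and I may test Hypothesis \ref{hypothese_suffisante} on it.

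For support and regularity, Theorem \ref{theoreme_anciens_resultats} puts $\supp f_\alpha$ inside $\{|x'|\leq\widehat{\beta}_{max}t'+k\}$, while finite speed of propagation for Maxwell together with the compactly supported field data places $\supp F$ inside the causal future $J^{+}$ of the ball $\{t'=0,\,|x'|\leq k\}$; as $\widehat{\beta}_{max}<1$, $f_\alpha$ lives there too. Since $A_\varphi$ is a time-orientation preserving isometry it commutes with $J^{+}$, so $A_\varphi^{-1}$ sends this region to the causal future of a compact set, whose intersection with the spacelike slice $\{t=T_\varphi\}$ is compact; this bounds the $x$-support. The $v$-support is compact because $(v^0_\alpha,v)$ lies on the mass-$m_\alpha$ shell, $A_\varphi^s(v^0_\alpha,v)$ is confined to $\{|\cdot|\leq\beta\}$, and applying $A_{-\varphi}$ returns $v$ to a fixed compact set depending only on $\varphi,m_\alpha,\beta$. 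Nonnegativity and $C^1$ regularity are immediate, as $f_\alpha^\varphi,F^\varphi$ are compositions of the $C^1$ objects with linear maps; the constraints \eqref{equation_condition_condition_initiale} hold because $(f_\alpha^\varphi,F^\varphi)$ is a true solution, so the divergence constraints propagate, and the total charge is the Lorentz-invariant integral of the conserved $4$-current, hence still vanishes. This yields a radius $k^\varphi\geq k$ realizing the first bullet.

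The main obstacle is the field decay, which I would attack through the identity $(t+|x|+2k)(t-|x|+2k)=(t+2k)^2-|x|^2$. From \eqref{equation_premiere_definition_F_varphi} one has $|F^\varphi(t,x)|\leq\|A_{-\varphi}\|^2|F(t',x')|$, so by \eqref{equation_estimee_EB_Glassey} everything reduces to comparing weights. The product $(t')^2-|x'|^2=t^2-|x|^2$ is boost-invariant, but the time shift is not:
\begin{equation*}
    (t'+2k)^2-|x'|^2=(t^2-|x|^2)+4kt'+4k^2,\qquad(t+2k^\varphi)^2-|x|^2=(t^2-|x|^2)+4k^\varphi t+4(k^\varphi)^2.
\end{equation*}
Thus all that is needed is the lower bound $t'\gtrsim_\varphi t$ on $\supp F^\varphi$: with $a=t-x^1,\,b=t+x^1$ and $t'=\tfrac12(e^{-\varphi}a+e^{\varphi}b)$, the localization $t'\geq0$, $|x'|\leq t'+k$ forces $e^{-\varphi}a,\,e^{\varphi}b\geq -k$, and distinguishing whether $a$ or $b$ is negative yields $t'\geq e^{-|\varphi|}t-k$, hence $t'\gtrsim_\varphi t$ for $t\geq T_\varphi$. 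A mediant inequality $\tfrac{X+Y_1}{X+Y_2}\geq\min(1,Y_1/Y_2)$ applied with $X=t^2-|x|^2\geq0$, together with a direct estimate using $t-|x|\gtrsim_\varphi k^\varphi$ (from finite speed of propagation for the boosted solution) when $X<0$, then gives $(t'+2k)^2-|x'|^2\gtrsim_\varphi(t+2k^\varphi)^2-|x|^2$ on the support, which is \eqref{equation_estimee_EB_Glassey} for $(E^\varphi,B^\varphi)$. The gradient bound \eqref{equation_estimee_gradEB_Glassey} goes the same way once $\partial_tF$ is controlled by $\nabla_xF$ and $j$ through Maxwell's equations — the current decays like $t^{-3}$ on its support by \eqref{equation_estimee_Q_infini_v}, which is dominated by the right-hand side of \eqref{equation_estimee_gradEB_Glassey} — so every spacetime derivative of $F$ obeys \eqref{equation_estimee_gradEB_Glassey}; the chain rule costs only a factor $\|A_{-\varphi}\|$, the extra weight $t-|x|+2k$ transfers as before, and the logarithm transfers since $t'+|x'|\lesssim_\varphi t+|x|$. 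The genuine difficulty — the only place the anisotropy of the boost bites — is exactly that the radial weight $|x|$, unlike $x^1$, does not transform simply, which is what forces one through the invariant $t^2-|x|^2$ and the support-based lower bound on $t'$.
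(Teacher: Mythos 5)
Your proof is correct, and its skeleton (domain containment via the slab $\{t\geq T_\varphi\}$ missing $\Omega^c$, compact support of the data on the initial slice, transfer of the decay estimates through Lemma \ref{lemme_metrique_minkowski}) matches the paper's three steps; your Steps on the domain and the support are essentially identical to the paper's, including the threshold $T_\varphi=|\sinh(\varphi)|k$. The genuine difference is in the decay estimate. The paper bounds each null weight separately, proving $A^0_\varphi(t,x)\pm|A^s_\varphi(t,x)|+2k\gtrsim t\pm|x|+2k_\varphi$ by writing $A^0_\varphi-|A^s_\varphi|=\frac{(t-|x|)(t+|x|)}{A^0_\varphi+|A^s_\varphi|}$ and using the two-sided bound $\frac{1}{\sqrt 2}(\cosh\varphi-|\sinh\varphi|)\leq\frac{A^0_\varphi+|A^s_\varphi|}{t+|x|}\leq\cosh\varphi+|\sinh\varphi|$; you instead compare the quadratic forms $(t'+2k)^2-|x'|^2$ and $(t+2k_\varphi)^2-|x|^2$ directly, reducing everything to the support-based lower bound $t'\geq e^{-|\varphi|}t-k$. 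Both routes work; yours needs one extra (easy) remark for the squared weight in \eqref{equation_estimee_gradEB_Glassey}, namely that the individual bound $t'-|x'|+2k\gtrsim_\varphi t-|x|+2k_\varphi$ follows from your product bound upon dividing by $t'+|x'|+2k\lesssim_\varphi t+|x|+2k_\varphi$ --- say this explicitly rather than ``transfers as before''. Two further remarks. First, you identify and close a step the paper leaves implicit: $\nabla_x F^\varphi$ involves $\partial_{t'}F$, which \eqref{equation_estimee_gradEB_Glassey} does not directly control and which must be recovered from Maxwell's equations together with the $t^{-3}$ decay of the current from \eqref{equation_estimee_Q_infini_v}; this is a point in your favor. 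Second, a slip of wording: on $\supp F^\varphi$ with $t^2-|x|^2<0$ you cannot have $t-|x|\gtrsim_\varphi k_\varphi$ (the left-hand side is negative there); what finite speed of propagation gives is $t-|x|\geq -k_\varphi$, hence $t-|x|+2k_\varphi\geq k_\varphi$, which is what your direct estimate in that regime actually uses.
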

\begin{proof}
    \textbf{Step 1:} We first need to prove that $[T_\varphi,+\infty)\times\R^3_x\times\R^3_v\subset A^{-1}_\varphi\Omega$. As can be seen in Figure \ref{figure_domaine_apres_composition}, it suffices to prove that, for $T_\varphi>0$ large enough, the set $\{t=T_\varphi\}$ does not intersect with $\widetilde{\Omega}:=\{(t,x)\in\R\times\R^3_x\,|\, t'\leq 0,\, |x'|< -t' +k\}$, represented in white in Figure \ref{figure_domaine_apres_composition}. Indeed, since 
    \begin{equation*}
        \{t=T_\varphi\}=\left\{(t,x)\in\R\times\R^3_x\,\Big|\, t'=\frac{T_\varphi}{\cosh(\varphi)}+\frac{\sinh(\varphi)}{\cosh(\varphi)}(x')^1\right\},
    \end{equation*}
    and $\frac{|\sinh(\varphi)|}{\cosh(\varphi)}<1$, $\{t=T_\varphi\}$ will either intersect with $\widetilde{\Omega}$ or always remain above it. In particular, it is enough to prove that, for $(x')^1=\pm k$, we have $t'=\frac{T_\varphi}{\cosh(\varphi)}+\frac{\sinh(\varphi)}{\cosh(\varphi)}(x')^1\geq 0$. We can then consider $T_\varphi:=|\sinh(\varphi)|k$ and derive the result.
        
    \textbf{Step 2: }We show that the initial data $(f_\alpha^\varphi,F^\varphi)(T_\varphi,\cdot)$ are compactly supported. By finite speed of propagation, for all $t\geq 0$ and $v\in\R^3_v$, we have
    \begin{equation}
        \label{equation_support_f_et_F}
        \supp \big(f_\alpha(t,\cdot,v),F(t,\cdot)\big)\subset \{x\in\R^3_x\,|\, |x|\leq t+k\}.
    \end{equation}
    Since, by Lemma \ref{lemme_metrique_minkowski}, $|A_\varphi^s(t,x)|+A_\varphi^0(t,x)\leq (\cosh(\varphi)+|\sinh(\varphi)|)(t+|x|)$, this implies
    \begin{equation*}
        \supp(f_\alpha^\varphi(T_\varphi,\cdot,v),F^\varphi(T_\varphi,\cdot))\subset \left\{x\in\R^3_x\,|\, |x|\leq T_\varphi +k\big(\cosh(\varphi)+|\sinh(\varphi)|\big)\right\}.
    \end{equation*}
    Moreover, by \eqref{equation_support_f_alpha},
    \begin{equation*}
        \supp f_\alpha^\varphi(T_\varphi,x,\cdot)\subset \left\{v\in\R^3_v\,|\, |v|\leq \sqrt{1+\beta^2}\big(\cosh(\varphi)+|\sinh(\varphi)|\big)\right\}.
    \end{equation*}
    We then derive the compact support hypothesis by taking 
    \begin{equation*}
        k_\varphi:=\max\left(T_\varphi +k(\cosh(\varphi)+|\sinh(\varphi)|),\sqrt{1+\beta^2}(\cosh(\varphi)+|\sinh(\varphi)|\right).
    \end{equation*}

    \textbf{Step 3:} Finally, it remains to prove the estimates on the fields. Recall that $A^0_\varphi(t,x)-|A^s_\varphi(t,x)|+k\geq 0$ as well as $t-|x|+k_\varphi\geq 0$ on the support of $F^\varphi$. In view of the estimates \eqref{equation_estimee_EB_Glassey}--\eqref{equation_estimee_gradEB_Glassey}, it suffices to prove
    \begin{equation}
        \label{equation_a_obtenir_dans_preuve_estimee_A_phi}
        A^0_\varphi(t,x)-|A^s_\varphi(t,x)|+2k\gtrsim t-|x|+2k_\varphi,\qquad A^0_\varphi(t,x)+|A^s_\varphi(t,x)|+2k\gtrsim t+|x|+2k_\varphi.
    \end{equation}
    For this, we mostly use the fact that for $a,b\geq 0$ and all $C>0$, we have $a+b\geq \min(1,1/C)(a+Cb)$. Then, since 
    \begin{equation*}
        \cosh(\varphi)+|\sinh(\varphi)|\geq\frac{A^0_\varphi(t,x)+|A^s_\varphi(t,x)|}{t+|x|}\geq \frac{1}{\sqrt{2}}(\cosh(\varphi)-|\sinh(\varphi)|),
    \end{equation*}
    we derive, with Lemma \ref{lemme_metrique_minkowski},
    \begin{align*}
        A^0_\varphi(t,x)-|A^s_\varphi(t,x)|+2k&\geq C\left(A^0_\varphi(t,x)-|A^s_\varphi(t,x)|+k+\frac{2\sqrt{2}k_\varphi}{\cosh(\varphi)-|\sinh(\varphi)|}\right)\\
        & \geq C\left(t-|x|+\left[\frac{2\sqrt{2}k_\varphi}{\cosh(\varphi)-|\sinh(\varphi)|}+k\right]\frac{A^0_\varphi(t,x)+|A^s_\varphi(t,x)|}{t+|x|}\right)\frac{t+|x|}{A^0_\varphi(t,x)+|A_\varphi(t,x)|}\\
        &\geq \widetilde{C}(t-|x|+2k_\varphi).
    \end{align*}
    We can then obtain \eqref{equation_a_obtenir_dans_preuve_estimee_A_phi} from the last two inequalities.
\end{proof}
\begin{remark}
    Here we derived functions which are defined only for $t\geq T_\varphi$. However, composing them with the translation in time $t\mapsto t+T_\varphi$, we derive a solution to \eqref{equation_RVM1} that satisfies Hypothesis \ref{hypothese_suffisante} for $t\geq 0$.
\end{remark}
\begin{corollary}
    Let $A\in\mathrm{SO_0(3,1)}$ and $(f^A_\alpha,F^A)$ be defined as in \eqref{equation_premiere_definition_f_alpha_varphi}--\eqref{equation_premiere_definition_F_varphi}. There exists $T_A>0$ large enough such that $(f^A_\alpha,F^A)$ is a solution to \eqref{equation_RVM1} on $[T_A,+\infty)\times\R^3_x\times\R^3_v$. Moreover, $(f^A_\alpha,F^A)$ satisfies Hypothesis \ref{hypothese_suffisante}.
    \label{corollaire_hypothese_verifiee_f_A}
\end{corollary}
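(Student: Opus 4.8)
The plan is to reduce the general element $A\in\mathrm{SO}_0(3,1)$ to the two cases already treated: rotations, handled by Proposition \ref{proposition_proprietes_conservees_rotations}, and the single boost $A_\varphi$, handled by Proposition \ref{proposition_hypothese_stable_par_lorentz_boost}. The factorization $A=R_1A_\varphi R_2$ provided by Proposition \ref{proposition_ecriture_boosts} lets me write $(f^A_\alpha,F^A)$ as the result of composing $(f_\alpha,F)$ successively with $R_1$, then $A_\varphi$, then $R_2$, and I can apply the elementary results at each step. The cornerstone that makes this chaining legitimate is a composition law for the operation $(f_\alpha,F)\mapsto(f^A_\alpha,F^A)$ defined in \eqref{equation_premiere_definition_f_alpha_varphi}--\eqref{equation_premiere_definition_F_varphi}, namely $(f^A_\alpha)^B=f^{AB}_\alpha$ and $(F^A)^B=F^{AB}$ for all $A,B\in\mathrm{SO}_0(3,1)$.

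I would establish this law first. For the Faraday tensor it is immediate from \eqref{equation_premiere_definition_F_varphi} together with $(AB)^{-1}=B^{-1}A^{-1}$: one computes $(F^A)^B(t,x)=B^{-1}F^A(B(t,x))B^{-1}=(AB)^{-1}F\big((AB)(t,x)\big)(AB)^{-1}=F^{AB}(t,x)$. For the density the only delicate point is the momentum argument. Writing $(t',x')=B(t,x)$ and $v'=B^s(v^0_\alpha,v)$, I must identify $\big((v')^0_\alpha,v'\big)$ with the full $4$-vector $B(v^0_\alpha,v)$. This is where the restricted Lorentz group enters decisively: the $4$-momentum $(v^0_\alpha,v)$ lies on the mass hyperboloid $\eta\big((v^0_\alpha,v),(v^0_\alpha,v)\big)=-m_\alpha^2$, so Lemma \ref{lemme_metrique_minkowski} gives $|B^0(v^0_\alpha,v)|^2-|v'|^2=m_\alpha^2$, and since $B$ is orthochronous one has $B^0(v^0_\alpha,v)>0$, whence $B^0(v^0_\alpha,v)=\sqrt{m_\alpha^2+|v'|^2}=(v')^0_\alpha$. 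Therefore $A^s\big((v')^0_\alpha,v'\big)=A^s\big(B(v^0_\alpha,v)\big)=(AB)^s(v^0_\alpha,v)$, which combined with $A(t',x')=(AB)(t,x)$ yields $(f^A_\alpha)^B=f^{AB}_\alpha$.

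With the composition law in hand, I would invoke the three results in order. Proposition \ref{proposition_proprietes_conservees_rotations} shows that $(f^{R_1}_\alpha,F^{R_1})$ is again a solution satisfying Hypothesis \ref{hypothese_suffisante} with the same support radius $k$, since $\widetilde{R_1}$ is an isometry and $R_1^0(t,x)=t$. Proposition \ref{proposition_hypothese_stable_par_lorentz_boost}, applied to \emph{this} solution, produces $T_\varphi>0$ and a new support radius so that the boosted pair is a solution satisfying Hypothesis \ref{hypothese_suffisante} on $[T_\varphi,+\infty)\times\R^3_x\times\R^3_v$. A second application of Proposition \ref{proposition_proprietes_conservees_rotations} then disposes of $R_2$, which alters neither the time coordinate nor the constants. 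Taking $T_A:=T_\varphi$ (and, if one wishes to return to $t\geq 0$, composing with the time translation of the remark following Proposition \ref{proposition_hypothese_stable_par_lorentz_boost}) finishes the argument. The only genuinely non-routine step is the momentum identity $\big((v')^0_\alpha,v'\big)=B(v^0_\alpha,v)$; the remaining work is bookkeeping, the one subtlety being that Propositions \ref{proposition_proprietes_conservees_rotations} and \ref{proposition_hypothese_stable_par_lorentz_boost} must be applied to the intermediate solutions $f^{R_1}_\alpha$ and $(f^{R_1}_\alpha)^{A_\varphi}$ rather than to the original $f_\alpha$, which is legitimate precisely because those statements hold for an arbitrary solution of \eqref{equation_RVM1} satisfying Hypothesis \ref{hypothese_suffisante}.
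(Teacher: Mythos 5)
Your argument is correct and follows exactly the paper's route: the paper's proof is a one-line citation of Propositions \ref{proposition_ecriture_boosts}, \ref{proposition_proprietes_conservees_rotations}, and \ref{proposition_hypothese_stable_par_lorentz_boost}, i.e.\ factor $A=R_1A_\varphi R_2$ and chain the rotation and boost cases. Your explicit verification of the composition law $(f^A_\alpha)^B=f^{AB}_\alpha$, including the momentum identity $\big((v')^0_\alpha,v'\big)=B(v^0_\alpha,v)$ via Lemma \ref{lemme_metrique_minkowski} and orthochronicity, is precisely the glue the paper leaves implicit.
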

\begin{proof}
    This follows directly from Propositions \ref{proposition_ecriture_boosts}, \ref{proposition_proprietes_conservees_rotations}, and \ref{proposition_hypothese_stable_par_lorentz_boost}.
\end{proof}
\begin{remark}
    Since $(f_\alpha^\varphi,F^\varphi)$ verifies Hypothesis \ref{hypothese_suffisante}, \cite[Propositions 2.5--2.6]{breton2025} hold. In particular, the following properties are verified by $g_\alpha^\varphi(t,x,v):=f_\alpha^\varphi(t,x+t\widehat{v_\alpha},v)$.
    There exists a constant $C>0$ such that, for all $t\geq T_\varphi$ and any $\alpha$,
        \begin{equation}
            \label{equation_support_g_alpha_varphi}
            \supp(g_\alpha^\varphi(t,\cdot))\subset \{(x,v)\in\R^3_x\times\R^3_v\,|\, |x|\leq C\log(2+t),\, |v|\leq \beta\}.
        \end{equation}
    Moreover, this implies that for any $1\leq\alpha\leq N$, and all $(t,x,v)$ with $t\geq T_\varphi$, we have
    \begin{align}
        |\partial_tg_\alpha^\varphi(t,x,v)|&\lesssim \frac{1}{2+t} \label{equation_estimee_dt_g}
    \end{align}
    \label{remarque_consequence_hypothese1}
\end{remark}

\subsection{Lorentz boosts and linear scattering}

We begin by showing that the densities $f^\varphi_\alpha$ satisfy the linear scattering assumption \eqref{equation_fausse_linear_scattering1}. This will prove that composing with an element of $\mathrm{SO_0(3,1)}$ preserves the linear scattering assumption. We then use this result to express the limit of $\int_{\R^3_x} f_\alpha^\varphi(t,x,v)\mathrm{d}x$ in terms of $Q_\infty^\alpha$ and $A_\varphi$.

\begin{proposition}
    The densities $f_\alpha^\varphi$ satisfy the linear scattering assumption, i.e. there exists $g_{\alpha\infty}^{\varphi}\in L^1(\R^3_x\times\R^3_v)$, such that
    \begin{equation}
        f_\alpha^\varphi(tv^0_\alpha,x+tv,v)\xrightarrow[t\rightarrow+\infty]{L^1_{x,v}}g^\varphi_{\alpha\infty}(x,v).
    \end{equation}
    Moreover, we have 
    \begin{equation}
        g^\varphi_{\alpha\infty}(x,v)=g_{\alpha\infty}\left(x^1\frac{v^0_\alpha}{A^0_\varphi(v^0_\alpha,v)},x^2-\frac{\sinh(\varphi)}{A^0_\varphi(v^0_\alpha,v)}x^1v^2,x^3-\frac{\sinh(\varphi)}{A^0_\varphi(v^0_\alpha,v)}x^1v^3,A^s_\varphi(v^0_\alpha,v)\right).
    \end{equation}
    \label{proposition_scattering_lineaire_f_phi}
\end{proposition}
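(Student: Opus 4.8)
The plan is to compute the composition explicitly and then reduce the desired convergence to the linear scattering already assumed for $f_\alpha$. Set $w:=A^s_\varphi(v^0_\alpha,v)$; since $A_\varphi$ preserves the mass shell (Lemma \ref{lemme_metrique_minkowski}), one has $A^0_\varphi(v^0_\alpha,v)=w^0_\alpha:=\sqrt{m_\alpha^2+|w|^2}$. First I would plug the flow $(tv^0_\alpha,x+tv,v)$ into the definition \eqref{equation_deuxieme_definition_f_et_F_varphi} and, using $A^0_\varphi(v^0_\alpha,v)=w^0_\alpha$ together with the boost identity $\cosh(\varphi)\,w^0_\alpha-\sinh(\varphi)\,w^1=v^0_\alpha$, factor the resulting spacetime point as
\[
A_\varphi(tv^0_\alpha,x+tv)=\big(sw^0_\alpha,\,y+sw\big),\qquad s:=t+\frac{\sinh(\varphi)\,x^1}{w^0_\alpha},
\]
where $y=(y^1,y^2,y^3)$ is exactly the triple appearing as the first three arguments of $g_{\alpha\infty}$ in the statement. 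Recalling the elementary identity $f_\alpha(\tau v^0_\alpha,z+\tau v,v)=g_\alpha(\tau v^0_\alpha,z,v)$ for $g_\alpha(t,x,v):=f_\alpha(t,x+t\widehat{v_\alpha},v)$, this yields the key identity $f_\alpha^\varphi(tv^0_\alpha,x+tv,v)=g_\alpha\big(sw^0_\alpha,y,w\big)$, and in the same way $g^\varphi_{\alpha\infty}(x,v)=g_{\alpha\infty}(y,w)$.

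Next I would observe that the map $(x,v)\mapsto(y,w)$ is a diffeomorphism, independent of $t$, whose Jacobian is identically $1$: the momentum change $v\mapsto w$ has Jacobian $w^0_\alpha/v^0_\alpha$ while the spatial change $x\mapsto y$ at fixed $v$ has Jacobian $v^0_\alpha/w^0_\alpha$, and the two factors cancel. This already shows $g^\varphi_{\alpha\infty}\in L^1$ with the same norm as $g_{\alpha\infty}$. Performing this change of variables reduces the proposition to proving
\[
\int_{\R^3\times\R^3}\big|g_\alpha(sw^0_\alpha,y,w)-g_{\alpha\infty}(y,w)\big|\,\mathrm{d}y\,\mathrm{d}w\xrightarrow[t\to+\infty]{}0,
\]
where now $s=t+\sinh(\varphi)\,y^1/v^0_\alpha$ after expressing $x^1$ through $y^1$, and where for $t$ large $s\to+\infty$ so that all evaluation points lie in the domain of $g_\alpha$.

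The main obstacle is that the time argument $sw^0_\alpha$ is not uniform in $(y,w)$, so one cannot directly invoke the scattering of $f_\alpha$, which holds along a single common time. I would split by the triangle inequality into $g_\alpha(sw^0_\alpha,y,w)-g_\alpha(tw^0_\alpha,y,w)$ and $g_\alpha(tw^0_\alpha,y,w)-g_{\alpha\infty}(y,w)$. The integral of the second difference tends to $0$ by the linear scattering hypothesis \eqref{equation_fausse_linear_scattering1} for $f_\alpha$, after simply renaming the integration variables $(x,v)$ as $(y,w)$. For the first, I would write it as $\int_{tw^0_\alpha}^{sw^0_\alpha}\partial_\tau g_\alpha(\tau,y,w)\,\mathrm{d}\tau$ and use the estimate $|\partial_\tau g_\alpha(\tau,y,w)|\lesssim(2+\tau)^{-1}$ (analogous to \eqref{equation_estimee_dt_g}), the identity $|sw^0_\alpha-tw^0_\alpha|=|\sinh(\varphi)\,x^1|$, and the support bound $\supp g_\alpha(\tau,\cdot)\subset\{|y|\le C\log(2+\tau),\,|w|\le\beta\}$, both valid since $f_\alpha$ satisfies Hypothesis \ref{hypothese_suffisante} (from \cite[Propositions 2.5--2.6]{breton2025}). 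On this support $s=t+O(\log(2+t))$, hence $\tau\gtrsim t$ and $|x^1|\lesssim|y^1|\lesssim\log(2+t)$, giving the pointwise bound $\lesssim\log(2+t)/(2+t)$; integrating over a set of measure $\lesssim\log^3(2+t)$ produces $\lesssim\log^4(2+t)/(2+t)\to 0$. Combining the two estimates yields the claimed $L^1$ convergence together with the announced formula for $g^\varphi_{\alpha\infty}$.
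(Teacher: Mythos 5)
Your proof is correct and follows essentially the same route as the paper: the same factorization $f_\alpha^\varphi(tv^0_\alpha,x+tv,v)=g_\alpha(sw^0_\alpha,y,w)$, the same unit-Jacobian change of variables $(x,v)\mapsto(y,w)$, and the same triangle-inequality split handled via the estimate $|\partial_t g_\alpha|\lesssim(2+t)^{-1}$ together with the logarithmic spreading of the support. The only cosmetic difference is in the bookkeeping of the error term, where the paper inserts an intermediate cutoff $\1_{|x|\leq C_1(2+t)^{1/5}}$ and obtains $O((2+t)^{-1/5})$ while you work directly with the $\log(2+t)$ support to get $O(\log^4(2+t)/(2+t))$.
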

\begin{proof}
     First recall 
    \begin{equation*}
        g_\alpha^\varphi(t,x,v)=f_\alpha^\varphi(t,x+t\widehat{v_\alpha},v).
    \end{equation*}
    We have 
    \begin{align*}
        g_\alpha^\varphi(tv^0_\alpha,x,v)&=g_\alpha\left(A_\varphi^0(tv^0_\alpha,x+tv),A_\varphi^s(tv^0_\alpha,x+tv)-A_\varphi^0(tv^0_\alpha,x+tv)A_\varphi^s(v^0_\alpha,v),A_\varphi^s(v^0_\alpha,v)\right)\\
        &=g_\alpha\left(s\sqrt{m_\alpha^2+|w|^2},y,w\right),
    \end{align*}
    with $w:=A^s_\varphi(v^0_\alpha,v),\,s:=t+\frac{\sinh(\varphi)}{\sqrt{m_\alpha^2+|w|^2}}x^1,$ and
    \begin{equation*}
        y:=\left(x^1\left(\cosh(\varphi)-\frac{\sinh(\varphi)}{\sqrt{m_\alpha^2+|w|^2}}w^1\right),x^2-\frac{\sinh(\varphi)}{\sqrt{m_\alpha^2+|w|^2}}x^1w^2,x^3-\frac{\sinh(\varphi)}{\sqrt{m_\alpha^2+|w|^2}}x^1w^3\right).
    \end{equation*}
    For simplicity, let us write $w^0_\alpha:=\sqrt{m_\alpha^2+|w|^2}$. Since $w=A^s_\varphi(v^0_\alpha,v)$, we obtain $w^0_\alpha=A^0_\varphi(v^0_\alpha,v)$. Using the support of $g_\alpha^\varphi$ in \eqref{equation_support_g_alpha_varphi}, we know that there exists a constant $C_1$ such that
    \begin{equation*}
        g_\alpha^\varphi(tv^0_\alpha,x,v)=g_\alpha^\varphi(tv^0_\alpha,x,v)\1_{|x|\leq C_1(2+t)^{1/5}}=g_\alpha(sw^0_\alpha,y,w)\1_{|x|\leq C_1(2+t)^{1/5}}.
    \end{equation*}
    This implies,
    \begin{align}
        \int_{\R^3_x\times\R^3_v}|g_\alpha^\varphi(tv^0_\alpha,x,v)-g_{\alpha\infty}^\varphi(x,v)|\mathrm{d}x\mathrm{d}v\leq&\int_{\R^3_x\times\R^3_v} \left|g_\alpha(tw^0_\alpha,y,w)\1_{|x|\leq C_1(2+t)^{1/5}}-g_{\alpha\infty}^\varphi(x,v)\right|\mathrm{d}x\mathrm{d}v\label{equation_preuve_ecriture_Q_infini_1}\\
        & +\int_{\R^3_x\times\R^3_v} \left|g_\alpha(sw^0_\alpha,y,w)-g_\alpha(tw^0_\alpha,y,w)\right|\1_{|x|\leq C_1(2+t)^{1/5}}\mathrm{d}x\mathrm{d}v.\nonumber
    \end{align}
    Let us study the first term of the right-hand side. Recall that there exists $C_2>0$ such that $g_\alpha(tw^0_\alpha,y,w)$ vanishes when $|y|>C_2\log(2+t)$. Moreover, since $|w|\leq \beta$, there exists a constant $C(\varphi)$ such that $|x|\leq C(\varphi)|y|$. This implies 
    \begin{equation*}
        g_\alpha(tw^0_\alpha,y,w)=g_\alpha(tw^0_\alpha,y,w)\1_{|x|\leq C_2C(\varphi)\log(2+t)}.
    \end{equation*}
    Finally, for $t$ large enough, 
    \begin{equation*}
        \1_{|x|\leq C_2 C(\varphi)\log(2+t)}\1_{|x|\leq C_1(2+t)^{1/5}}=\1_{|x|\leq C_2C(\varphi)\log(2+t)},
    \end{equation*}
    and then 
    \begin{equation*}
        g_\alpha(tw^0_\alpha,y,w)\1_{|x|\leq C_1(2+t)^{1/5}}=g_\alpha(tw^0_\alpha,y,w).
    \end{equation*}
    It remains to show that the last integral on the right-hand side of \eqref{equation_preuve_ecriture_Q_infini_1} converges to $0$. By the mean value theorem and \eqref{equation_estimee_dt_g}, for $t$ large enough, we derive 
    \begin{align*}
        \int_{\R^3_x\times\R^3_v} \left|g_\alpha(sw^0_\alpha,y,w)-g_\alpha(tw^0_\alpha,y,w)\right|\1_{|x|\leq C_1(2+t)^{1/5}}\mathrm{d}x\mathrm{d}v\lesssim&\frac{1}{2+t}\int_{\R^3_x\times\R^3_v} |x|\1_{|x|\leq C_1(2+t)^{1/5}}\1_{|w|\leq \beta}\mathrm{d}x\mathrm{d}v\\
        \lesssim & (2+t)^{-1/5}.
    \end{align*}
    Finally, from \eqref{equation_preuve_ecriture_Q_infini_1}, we obtain
    \begin{align*}
        \int_{\R^3_x\times\R^3_v} |g_\alpha^\varphi(tv^0_\alpha,x,v)-g^\varphi_{\alpha\infty}(x,v)|\mathrm{d}x\mathrm{d}v \leq & \int_{\R^3_x\times\R^3_v} |g_\alpha(tw^0_\alpha,y,w)-g_{\alpha\infty}(y,w)|\mathrm{d}x\mathrm{d}v+ O\left((2+t)^{-1/5}\right).
    \end{align*}
    Then, the map
    \begin{equation*}
        G^\varphi:(x,v)\in\R^3_x\times\R^3_v\longmapsto(y,w),
    \end{equation*}
    is a $C^1$ diffeomorphism with Jacobian $1$. This change of variables leaves us with 
    \begin{equation*}
        \int_{\R^3_x\times\R^3_v} |g_\alpha^\varphi(tv^0_\alpha,x,v)-g^\varphi_{\alpha\infty}(x,v)|\mathrm{d}x\mathrm{d}v \leq \int_{\R^3_y\times\R^3_w} |g_\alpha(tw^0_\alpha,y,w)-g_{\alpha\infty}(y,w)|\mathrm{d}y\mathrm{d}w+ O\left((2+t)^{-1/5}\right).
    \end{equation*}
    Finally, since $f_\alpha$ satisfies the linear scattering assumption \eqref{equation_fausse_linear_scattering1}, we derive the result.
\end{proof}
\begin{corollary}
    Let $A\in\mathrm{SO_0(3,1)}$ and $(f^A_\alpha,F^A)$ be defined as in \eqref{equation_premiere_definition_f_alpha_varphi}--\eqref{equation_premiere_definition_F_varphi}. $(f^A_\alpha,F^A)$ satisfies the linear scattering hypothesis \eqref{equation_fausse_linear_scattering1}.
    \label{corollaire_scattering_lineaire_f_A}
\end{corollary}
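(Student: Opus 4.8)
The plan is to reduce the general case to the two situations already treated, namely rotations (Proposition \ref{proposition_proprietes_conservees_rotations}) and boosts (Proposition \ref{proposition_scattering_lineaire_f_phi}), and then to glue them together by means of the decomposition in Proposition \ref{proposition_ecriture_boosts}. The only genuinely new ingredient needed is a composition (group-action) identity for the assignment $A\mapsto(f_\alpha^A,F^A)$, which allows us to iterate the two elementary cases and recognize the outcome as a single composition.

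First I would record the composition law: for $A,B\in\mathrm{SO}_0(3,1)$,
\[
(f_\alpha^A)^B=f_\alpha^{AB},\qquad (F^A)^B=F^{AB}.
\]
For the spacetime argument this is immediate from $A(B(t,x))=(AB)(t,x)$, and for the Faraday tensor from $B^{-1}A^{-1}=(AB)^{-1}$ together with the definition \eqref{equation_premiere_definition_f_alpha_varphi}. The point requiring care is the momentum argument: iterating \eqref{equation_premiere_definition_f_alpha_varphi} produces $A^s(w_\alpha^0,w)$ with $w:=B^s(v_\alpha^0,v)$ and $w_\alpha^0:=\sqrt{m_\alpha^2+|w|^2}$, and one must check this equals $(AB)^s(v_\alpha^0,v)$. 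Here I would invoke Lemma \ref{lemme_metrique_minkowski}: the four-vector $(v_\alpha^0,v)$ lies on the mass-$m_\alpha$ hyperboloid, which $B$ preserves, so that $B^0(v_\alpha^0,v)=\sqrt{m_\alpha^2+|B^s(v_\alpha^0,v)|^2}=w_\alpha^0$. Hence $B(v_\alpha^0,v)=(w_\alpha^0,w)$ and therefore $A^s(w_\alpha^0,w)=A^s\big(B(v_\alpha^0,v)\big)=(AB)^s(v_\alpha^0,v)$, as needed. This is precisely the step where the relativistic normalization $v_\alpha^0=\sqrt{m_\alpha^2+|v|^2}$ is essential.

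Next I would observe that, although Propositions \ref{proposition_proprietes_conservees_rotations} and \ref{proposition_scattering_lineaire_f_phi} are stated for the fixed solution $(f_\alpha,F)$, their proofs only use Hypothesis \ref{hypothese_suffisante} and the linear scattering hypothesis \eqref{equation_fausse_linear_scattering1}; hence they apply verbatim to any admissible solution. Since by Corollary \ref{corollaire_hypothese_verifiee_f_A} each composed solution again satisfies Hypothesis \ref{hypothese_suffisante}, the propositions may be applied to the intermediate solutions in turn. To conclude, I would write $A=R_1A_\varphi R_2$ via Proposition \ref{proposition_ecriture_boosts}: Proposition \ref{proposition_proprietes_conservees_rotations} gives that $(f_\alpha^{R_1},F^{R_1})$ satisfies \eqref{equation_fausse_linear_scattering1}; Proposition \ref{proposition_scattering_lineaire_f_phi} applied to this admissible solution gives that $(f_\alpha^{R_1})^{A_\varphi}$ satisfies \eqref{equation_fausse_linear_scattering1}; and Proposition \ref{proposition_proprietes_conservees_rotations} applied once more shows $\big((f_\alpha^{R_1})^{A_\varphi}\big)^{R_2}$ satisfies \eqref{equation_fausse_linear_scattering1}. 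By the composition law this last solution is exactly $(f_\alpha^A,F^A)$, which proves the corollary.

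The main obstacle is the momentum bookkeeping in the composition law: one must verify that a Lorentz-transformed momentum $A^s(v_\alpha^0,v)$ of a mass-$m_\alpha$ particle again carries energy $\sqrt{m_\alpha^2+|A^s(v_\alpha^0,v)|^2}$, so that two successive single compositions collapse to a single composition and the elementary cases can be chained. Once this mass-shell invariance is in hand via Lemma \ref{lemme_metrique_minkowski}, the corollary follows formally. A minor additional point is that $f_\alpha^\varphi$ is a priori defined only for $t\geq T_\varphi$; since \eqref{equation_fausse_linear_scattering1} is an assertion as $t\to+\infty$, the finite time-translation from the remark following Proposition \ref{proposition_hypothese_stable_par_lorentz_boost} does not affect the existence or value of the $L^1$ limit.
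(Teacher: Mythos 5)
Your proposal is correct and follows essentially the same route as the paper, which simply cites Propositions \ref{proposition_ecriture_boosts}, \ref{proposition_proprietes_conservees_rotations}, and \ref{proposition_scattering_lineaire_f_phi}; you have merely made explicit the composition law $(f_\alpha^A)^B=f_\alpha^{AB}$ and the mass-shell invariance $B^0(v_\alpha^0,v)=\sqrt{m_\alpha^2+|B^s(v_\alpha^0,v)|^2}$ that the paper leaves implicit. These verifications are accurate and welcome, but they do not constitute a different argument.
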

\begin{proof}
    This is a consequence of Propositions \ref{proposition_ecriture_boosts}, \ref{proposition_proprietes_conservees_rotations}, and \ref{proposition_scattering_lineaire_f_phi}.
\end{proof}
\begin{proposition}
    As expressed in \eqref{equation_corollaire_estimee_Q_infini}, let $Q_\infty^{\alpha,\varphi}(v_\alpha)$ be the limit of $\int_{\R^3_x}f_\alpha^\varphi(t,x,v)\mathrm{d}x$. The following expression holds for $Q_\infty^{\alpha,\varphi}$ 
    \begin{equation*}
        Q_\infty^{\alpha,\varphi}(v)=\frac{A_\varphi^0(v^0,v)}{v^0}Q^\alpha_\infty\left(A_\varphi^s(v^0,v)\right).
    \end{equation*}
    Let $Q_\infty^\varphi:=\sum_{1\leq \alpha\leq N}m_\alpha^3 e_\alpha Q^{\alpha,\varphi}_\infty$. Then  $Q_\infty^\varphi \neq 0$ and 
    \begin{equation*}
        Q_\infty^{\varphi}(v)=\frac{A_\varphi^0(v^0,v)}{v^0}Q_\infty\left(A_\varphi^s(v^0,v)\right).
    \end{equation*}
    \label{proposition_expression_Q_infini_phi}
\end{proposition}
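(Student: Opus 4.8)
The plan is to evaluate $Q^{\alpha,\varphi}_\infty$ by integrating in $x$ the explicit scattering profile $g^\varphi_{\alpha\infty}$ furnished by Proposition~\ref{proposition_scattering_lineaire_f_phi}, then to sum over $\alpha$, carefully tracking the mass normalisations. First, by Corollary~\ref{corollaire_hypothese_verifiee_f_A} the boosted solution $(f^\varphi_\alpha,F^\varphi)$ satisfies Hypothesis~\ref{hypothese_suffisante}, so estimate~\eqref{equation_corollaire_estimee_Q_infini} applies to $f^\varphi_\alpha$ and, by definition, its $x$-marginal converges to $Q^{\alpha,\varphi}_\infty(v_\alpha)$. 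On the other hand $\int_{\R^3_x} f^\varphi_\alpha(tv^0_\alpha,x,v)\,\mathrm dx=\int_{\R^3_x} g^\varphi_\alpha(tv^0_\alpha,x,v)\,\mathrm dx$ by translation invariance in $x$, and Proposition~\ref{proposition_scattering_lineaire_f_phi} gives $L^1$ convergence of $g^\varphi_\alpha(tv^0_\alpha,\cdot)$ to $g^\varphi_{\alpha\infty}$, hence convergence of the $x$-marginals in $L^1_v$. Arguing exactly as at the start of the proof of Proposition~\ref{proposition_Q_infini_nul_quand_L_non_nul}, the two limits must agree, so that for all $v$,
\begin{equation*}
    Q^{\alpha,\varphi}_\infty(v_\alpha)=\int_{\R^3_x} g^\varphi_{\alpha\infty}(x,v)\,\mathrm dx.
\end{equation*}

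Next, I insert the explicit formula for $g^\varphi_{\alpha\infty}$ from Proposition~\ref{proposition_scattering_lineaire_f_phi} and integrate. For fixed $v$, writing $w:=A^s_\varphi(v^0_\alpha,v)$, the spatial change of variables $x\mapsto y$ appearing in that formula is lower triangular with Jacobian determinant $v^0_\alpha/A^0_\varphi(v^0_\alpha,v)$, so that $\mathrm dx=\tfrac{A^0_\varphi(v^0_\alpha,v)}{v^0_\alpha}\,\mathrm dy$. Using $\int_{\R^3_y} g_{\alpha\infty}(y,w)\,\mathrm dy=Q^\alpha_\infty(w/m_\alpha)$, which is the identification of the first paragraph applied to the unboosted solution, I obtain
\begin{equation*}
    Q^{\alpha,\varphi}_\infty(v_\alpha)=\frac{A^0_\varphi(v^0_\alpha,v)}{v^0_\alpha}\,Q^\alpha_\infty\!\left(\frac{A^s_\varphi(v^0_\alpha,v)}{m_\alpha}\right).
\end{equation*}
Setting $v=m_\alpha u$, so that $v^0_\alpha=m_\alpha u^0$, the linearity of $A_\varphi$ gives $A^0_\varphi(v^0_\alpha,v)=m_\alpha A^0_\varphi(u^0,u)$ and $A^s_\varphi(v^0_\alpha,v)=m_\alpha A^s_\varphi(u^0,u)$; every factor $m_\alpha$ cancels, and renaming $u$ as $v$ produces the stated expression for $Q^{\alpha,\varphi}_\infty$.

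Multiplying by $m_\alpha^3 e_\alpha$, summing over $\alpha$, and factoring out the $\alpha$-independent prefactor $A^0_\varphi(v^0,v)/v^0$, definition~\eqref{equation_definition_Q_infini} reconstructs $Q_\infty$ and yields the claimed formula for $Q^\varphi_\infty$. To see that $Q^\varphi_\infty\neq 0$, note that on the unit-mass hyperboloid Lemma~\ref{lemme_metrique_minkowski} together with $\cosh(\varphi)>|\sinh(\varphi)|$ gives $A^0_\varphi(v^0,v)\geq(\cosh(\varphi)-|\sinh(\varphi)|)\,v^0>0$, while $A_\varphi$ maps the future hyperboloid bijectively onto itself, so that $v\mapsto A^s_\varphi(v^0,v)$ is a bijection of $\R^3_v$. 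Since $Q_\infty\neq 0$ by assumption, choosing $v$ for which $A^s_\varphi(v^0,v)$ equals a point where $Q_\infty$ does not vanish gives $Q^\varphi_\infty(v)\neq 0$.

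The only genuinely delicate point is the normalisation bookkeeping in the second paragraph: one must consistently keep the generic argument $v$ distinct from $v_\alpha=v/m_\alpha$, and it is precisely the linearity of $A_\varphi$ combined with the scaling $v^0_\alpha=m_\alpha\langle v/m_\alpha\rangle$ that cancels the $m_\alpha$-dependence and produces an $\alpha$-independent prefactor. Without this cancellation the summation in the third paragraph could not collapse back to $Q_\infty$, which is what makes the final formula clean.
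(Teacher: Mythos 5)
Your proposal is correct and follows essentially the same route as the paper: identify $Q_\infty^{\alpha,\varphi}(v_\alpha)$ with $\int_{\R^3_x} g^\varphi_{\alpha\infty}(x,v)\,\mathrm{d}x$, then integrate the explicit profile from Proposition~\ref{proposition_scattering_lineaire_f_phi} via the triangular change of variables $x\mapsto y$, whose Jacobian $v^0_\alpha/A^0_\varphi(v^0_\alpha,v)$ produces the prefactor (the paper writes it equivalently as $A^0_{-\varphi}(w^0_\alpha,w)/w^0_\alpha$). Your mass-rescaling bookkeeping and the bijectivity argument for $Q^\varphi_\infty\neq 0$ are details the paper leaves implicit, but they match its intent.
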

\begin{proof}
    Recall Proposition \ref{proposition_scattering_lineaire_f_phi} and \eqref{equation_corollaire_estimee_Q_infini}. With the same notations and $w\in\R^3_v$ fixed, we consider the $C^1$ diffeomorphism $H^\varphi$ defined by
    \begin{equation*}
        H^\varphi(x)=\left(x^1\left(\cosh(\varphi)-\frac{\sinh(\varphi)}{\sqrt{m_\alpha^2+|w|^2}}w^1\right),x^2-\frac{\sinh(\varphi)}{\sqrt{m_\alpha^2+|w|^2}}x^1w^2,x^3-\frac{\sinh(\varphi)}{\sqrt{m_\alpha^2+|w|^2}}x^1w^3\right).
    \end{equation*} 
    $H^\varphi$ has Jacobian $\left|\cosh(\varphi)-\sinh(\varphi)\frac{w^1}{w^0_\alpha}\right|=\frac{A_{-\varphi}^0(w^0_\alpha,w)}{w_\alpha^0}$. The result follows.
\end{proof}

\begin{remark}
    Note that this result can be shown without relying on the linear scattering hypothesis. In fact, we only use the expression of $g_\alpha^\varphi(tv^0_\alpha,x,v)$, equation \eqref{equation_corollaire_estimee_Q_infini} and the results presented in Remark \ref{remarque_consequence_hypothese1}.
\end{remark}

\begin{corollary}
    Let $Q_{\infty}^{\alpha,A}(v_\alpha)$ be the limit of $\int_{\R^3_x}f_\alpha^A(t,x,v)\mathrm{d}x$. The following expression holds for $Q_\infty^{\alpha,A}$
    \begin{equation*}
        Q_\infty^{\alpha,A}(v)=\frac{A^0(v^0,v)}{v^0}Q^\alpha_\infty\left(A^s(v^0,v)\right).
    \end{equation*}
    Let $Q_\infty^A:=\sum_{1\leq \alpha\leq N}m_\alpha^3 e_\alpha Q^{\alpha,A}_\infty$. Then  $Q_\infty^A \neq 0$ and 
    \begin{equation*}
        Q_\infty^{A}(v)=\frac{A^0(v^0,v)}{v^0}Q_\infty\left(A^s(v^0,v)\right).
    \end{equation*}
    \label{corollaire_expression_Q_infini_A}
\end{corollary}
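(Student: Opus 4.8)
The plan is to reduce the statement to the two elementary transformations already treated, namely rotations (Proposition \ref{proposition_proprietes_conservees_rotations}) and boosts $A_\varphi$ (Proposition \ref{proposition_expression_Q_infini_phi}), and to chain them through the product decomposition of $A$. First I would invoke Proposition \ref{proposition_ecriture_boosts} to write $A = R_1 A_\varphi R_2$ with $R_1, R_2$ rotations. The structural ingredient I would establish is the cocycle identity
\begin{equation*}
    \left(f_\alpha^B\right)^C = f_\alpha^{BC}, \qquad B, C \in \mathrm{SO}_0(3,1),
\end{equation*}
which follows by unwinding the definition \eqref{equation_premiere_definition_f_alpha_varphi}: since any Lorentz transformation preserves the mass-$m_\alpha$ shell, applying $C$ to $(v_\alpha^0, v)$ returns an on-shell momentum, so that $\sqrt{m_\alpha^2 + |C^s(v_\alpha^0,v)|^2} = C^0(v_\alpha^0,v)$ and hence $B^s\!\left(C^0(v_\alpha^0,v), C^s(v_\alpha^0,v)\right) = (BC)^s(v_\alpha^0,v)$, while the spacetime slot composes trivially. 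In particular $f_\alpha^A = \left(\left(f_\alpha^{R_1}\right)^{A_\varphi}\right)^{R_2}$.

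I would then compose the three elementary transformations one at a time. At each stage the intermediate function is again a solution verifying Hypothesis \ref{hypothese_suffisante} (by Corollary \ref{corollaire_hypothese_verifiee_f_A}) and the linear scattering hypothesis (by Corollary \ref{corollaire_scattering_lineaire_f_A}), so the rules of Propositions \ref{proposition_proprietes_conservees_rotations} and \ref{proposition_expression_Q_infini_phi} apply verbatim at each step; indeed, as noted after Proposition \ref{proposition_expression_Q_infini_phi}, the boost rule does not even require the scattering assumption. Encoding each rule as the operator $(T_BQ)(v) := \frac{B^0(v^0,v)}{v^0}\,Q\!\left(B^s(v^0,v)\right)$, which collapses to $Q \mapsto Q \circ \widetilde{R}$ when $B = R$ is a rotation since $R^0(v^0,v) = v^0$, I would verify the composition law $T_B \circ T_C = T_{CB}$; this again uses mass-shell invariance to identify the intermediate energy $\sqrt{1+|B^s(v^0,v)|^2}$ with $B^0(v^0,v)$, so that the two prefactors telescope. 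Applying this to $Q_\infty^{\alpha,A} = T_{R_2}\,T_{A_\varphi}\,T_{R_1}\,Q_\infty^\alpha$ collapses the product to $T_{R_1 A_\varphi R_2}\,Q_\infty^\alpha = T_A\,Q_\infty^\alpha$, which is precisely the asserted formula for $Q_\infty^{\alpha,A}$.

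The formula for $Q_\infty^A$ is then immediate: multiplying by $m_\alpha^3 e_\alpha$ and summing over $\alpha$ using \eqref{equation_definition_Q_infini}, the $\alpha$-independent prefactor $A^0(v^0,v)/v^0$ factors out. For the non-vanishing $Q_\infty^A \neq 0$ I would note that $v \mapsto A^s(v^0,v)$ is a bijection of $\R^3_v$ (its inverse is the same velocity map attached to $A^{-1}$) and that the prefactor $A^0(v^0,v)/v^0 = \sqrt{1+|A^s(v^0,v)|^2}/v^0$ is strictly positive; hence, picking $v_0$ whose image $A^s(v_0^0,v_0)$ lands at a point where $Q_\infty \neq 0$ (such a point exists by the standing assumption $Q_\infty \neq 0$) yields $Q_\infty^A(v_0) \neq 0$.

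The hard part will be the careful bookkeeping of the energy prefactors in both the cocycle identity and the operator law $T_B \circ T_C = T_{CB}$; both rest on the single fact that Lorentz transformations preserve each mass shell, so that $B$ maps $(v_\alpha^0,v)$ to the on-shell four-momentum $\big(\sqrt{m_\alpha^2+|B^s(v_\alpha^0,v)|^2},\,B^s(v_\alpha^0,v)\big)$. Once these two algebraic identities are in place, the corollary is a purely formal consequence of the rotation and boost cases.
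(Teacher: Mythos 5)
Your proposal is correct and follows essentially the same route as the paper, which simply cites Propositions \ref{proposition_proprietes_conservees_rotations} and \ref{proposition_expression_Q_infini_phi} together with the decomposition $A=R_1A_\varphi R_2$ of Proposition \ref{proposition_ecriture_boosts}. You merely make explicit the cocycle identity and the composition law $T_B\circ T_C=T_{CB}$ that the paper leaves implicit, and both check out via mass-shell invariance.
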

\begin{proof}
    This follows from Propositions \ref{proposition_proprietes_conservees_rotations} and \ref{proposition_expression_Q_infini_phi}.
\end{proof}

%


\subsection{Proof of Theorem \ref{theoreme_pas_de_scattering_lineaire}}

Recall that we assumed $(f_\alpha,F)$ to be a solution to \eqref{equation_RVM1} that satisfies Hypothesis \ref{hypothese_suffisante} and the linear scattering assumption \eqref{equation_fausse_linear_scattering1}. Moreover, we assumed that $Q_\infty\neq 0$. In this context, we were able to compose $(f_\alpha,F)$ with $A\in\mathrm{SO}_0(3,1)$ and preserve Hypothesis \ref{hypothese_suffisante}, the linear scattering assumption \eqref{equation_fausse_linear_scattering1}, as well as $Q_\infty^A\neq 0$ (see Corollaries \ref{corollaire_hypothese_verifiee_f_A}, \ref{corollaire_scattering_lineaire_f_A}, and \ref{corollaire_expression_Q_infini_A}).\\
Moreover, we recall that by Propositions \ref{corollaire_Q_infini_L_non_nul} and \ref{proposition_Q_infini_nul_quand_L_non_nul}, it suffices to prove that $Q_\infty^A(0)\neq0$ for a certain $A\in\mathrm{SO}_0(3,1)$ to obtain a contradiction. Since $Q_\infty\neq0$, we know that there exists $v\in\R^3_v$ such that $Q_\infty(v)\neq0$. Moreover, according to Lemma \ref{lemme_existence_A_nul_en_espace}, there exists $A\in\mathrm{SO_0(3,1)}$ such that
\begin{equation*}
    A\left(1,0,0,0\right)=(v^0,v).
\end{equation*}
Finally, by Corollary \ref{corollaire_expression_Q_infini_A}, we have
\begin{equation*}
    Q_\infty^A(0)=v^0Q_\infty(v)\neq0.
\end{equation*}
This concludes the proof.



\printbibliography

\end{document}